\newtheoremstyle{rec}
{\topsep}%   space before
{\topsep}%   space after
{\itshape}%  Body font
{}%          Indent amount (empty for no indent, \parindent)
{\itshape}% Thm head font
{.}%         Punctuation after thm head
{0.5em}%   Space after thm head (\newline = linebreak)
{\thmname{#1}\thmnumber{ (#2)}\thmnote{~: \textit{#3}}}% 
\theoremstyle{rec}
\newtheorem*{Hk}{$(H_k)$}
\theoremstyle{plain}
\newtheorem{lemma}{Lemma}[section]
\newtheorem{proposition}{Proposition}[section]
\newtheorem{theorem}{Theorem}[section]
\newtheorem{corollary}{Corollary}[section]
\theoremstyle{remark}
\newtheorem{remark}{Remark}[section]
\newtheorem{hyp}{Condition}[section]
\newtheorem{closing}{Closing remark}[section]
\theoremstyle{definition}
\newtheorem*{assumptions}{Assumptions on $p $}
\newtheorem{definition}{Definition}[section]
\newtheorem{claim}{Claim}[section]
\newtheorem*{comparison}{Comparison principle for}
\newtheorem*{caseone}{Case one: $t_{\Sigma}(h_1)\geq t_{\Sigma}(h_0)$}
\newtheorem*{casetwo}{Case two: $t_{\Sigma}(h_1)< t_{\Sigma}(h_0)$}
\renewcommand\theequation{\thesection.\arabic{equation}}
\renewcommand{\theequation}{\arabic{section}.\arabic{equation}}
\title{Finite-time singularity of the stochastic harmonic map flow\thanks{Second version, November 2018.}}
\author{Antoine Hocquet\thanks{Email: antoine.hocquet@wanadoo.fr, Tel.: +49 30 314-25728, Fax: +49 30 314-25191}
  \thanks{This work was done while the author was affiliated at the CMAP, \'Ecole Polytechnique, CNRS, Universit\'e Paris Saclay, 91128, Palaiseau, France.}
\\{\small Technische Universit\"at Berlin, Instit\"ut f\"ur Mathematik, Strasse des 17. Juni 136,}
\\{\small Berlin-Charlottenburg, Germany.}
}
\date{}
\newcommand{\R}{\ensuremath{\mathbb{R}}}
\newcommand{\D}{\ensuremath{{\mathbb{D}^2}}}
\newcommand{\E}{\ensuremath{\mathbb{E}}}
\newcommand{\M}{\ensuremath{M}}
\newcommand{\N}{\ensuremath{\mathbb{N}}}
\renewcommand{\H}{\ensuremath{{\mathscr{H}}}}
\newcommand{\HH}{\ensuremath{{\mathscr{\bar H}}}}
\newcommand{\F}{\ensuremath{{\mathscr{F}}}}
\renewcommand{\P}{\ensuremath{\mathbb{P}}}
\newcommand{\J}{\ensuremath{J}}
\newcommand{\I}{\ensuremath{I}}
\newcommand{\spt}{\ensuremath{\mathrm{Supp}\,}}
\DeclareMathOperator*{\T}{\,{}^t}
\DeclareMathOperator*{\interior}{Int}
\renewcommand{\d}{\ensuremath{\hspace{0.05em}\mathrm{d}}}        % \!=petit espace négatif
\providecommand{\msc}[1]{{\small \textit{Mathematics Subject Classification---} #1}}
\providecommand{\keywords}[1]{{\small \textit{Keywords---} #1}}
\begin{document}
\maketitle

\keywords{Stochastic partial differential equation, Harmonic Maps, Blow-up\\} 

\msc{60H15 (35R60), 58E20, 35K55, 35B44}

\begin{abstract}
We investigate the influence of an infinite dimensional Gaussian noise on the bubbling phenomenon for the stochastic harmonic map flow $u(t,\cdot ):{\mathbb D}^2\to\mathbb S^2$, from the two-dimensional unit disc onto the sphere. The diffusion term is assumed to have range one pointwisely in the tangent space $T_{u(t,x)}\mathbb S^2$, so that the noise preserves the 1-corotational symmetry of solutions. Under the assumption that its space-correlation is of trace class (in some appropriate Hilbert space), we prove that the noise generates blow-up with positive probability. This scenario happens no matter how we choose the initial data, provided it fulfills the latter symmetry assumption.
\end{abstract}

\tableofcontents

%%%%%%%%%%%%%%%%%%%%%%%%%%%%%%%%%%%%%%%%%%%%%%%%%%%%%%%%%%%%%%%%%%%%%%%%%%%%%%%
%%%%%%%%%%%%%%%%%%%%%%%%%%%% INTRODUCTION %%%%%%%%%%%%%%%%%%%%%%%%%%%%%%%%%%%%%
%%%%%%%%%%%%%%%%%%%%%%%%%%%%%%%%%%%%%%%%%%%%%%%%%%%%%%%%%%%%%%%%%%%%%%%%%%%%%%%
\section{Introduction and main results}\label{section_introduction}
\label{intro}
\subsection{Motivations}
The effect of a noise term on the appearance of a finite-time singularity has already been investigated for several stochastic PDE's, including the Schr\"odinger equation \cite{de2002effect,de2005blow} where it is shown to generate blow-up with positive probability, for any initial data.
Some results in the same spirit have been obtained for the stochastic heat equation \cite{mueller1993blowup,mueller2000critical,dozzi2010finite}, and also for the so-called Dyadic Model \cite{romito2014uniqueness}, where the author shows in addition the ineluctability of the blow-up.
Our work comes from an attempt to understand the effect of noise on the bubbling phenomenon for the two-dimensional stochastic Landau-Lifshitz-Gilbert equation (SLLG) \cite{landau1935theory,gilbert1955lagrangian}, for which the stochastic harmonic map flow turns out to be a simplified version.

The magnetization of a ferromagnetic material $\M\subset\R^3$ can be represented as a time-dependent continuum $u:[0,T]\times \M\to\mathbb S^2$, whose stationary solutions should solve the minimization problem for the Brown energy $E\equiv(1/2)\int_\M|\nabla u|^2\d\M$, under the pointwise constraint 
\begin{equation}\label{constraint}
|u(t,x)|=1\,, \enskip \text{a.e.}
\end{equation}
The latter energy corresponds to closest neighbour interaction, and here we do not take into account other contributions such as anisotropy, stray field or external field (for details see \cite{brown1963micromagnetics}).
Ignoring for now the noise and dropping the so-called gyromagnetic term (which has no effect on the energy), we obtain the harmonic map flow from $\M$ to the unit sphere $\mathbb S^2$, namely
\begin{equation}\label{HMF}\tag{HMF}
\left\{
\begin{aligned}
&\partial_t u=\Delta u+u|\nabla u|^2\qquad \text{on}\enskip (0,T]\times \M\enskip ,
\\
& u=\varphi\,\qquad\text{on}\enskip [0,T]\times\partial \M\cup\{0\}\times \M\,,
\end{aligned}
\right.
\end{equation}
where $\Delta $ denotes the Laplacian with respect to each of the components of $u\equiv(u^1,u^2,u^3)$,
and $|\nabla u|^2:=\sum_{i\leq 3,j\leq2}(\partial _ju^i)^2.$ 
Note that \eqref{HMF} is in fact the gradient flow associated to $E$, under the constraint \eqref{constraint}.
This model has been independently studied e.g.\ in \cite{eells1964harmonic,hamilton1975harmonic,eells1978report,eells1988another}, where target manifolds more general than $\mathbb S^2$ are considered.
It provides a tool to construct a \textit{harmonic map} in the homotopy class of $\varphi $, namely a regular solution to the minimization problem associated to $E$.

Existence of finite-time blowing-up solutions has been shown in dimensions strictly greater than 2 in \cite{coron1989equations}, and then later in 2D in \cite{chang1992finite}.
The two-dimensional case is more challenging in the sense that
the $H^1$ a priori estimate barely fails to give well-posedness.
Another specific feature of the 2D case is that
in the absence of noise, there can be at most one energy-decreasing solution in the latter class \cite{freire1995uniqueness}. Note however that the energy cannot be decreasing with a stochastic term as in \eqref{SHMF} below (as can be shown by applying It\^o Formula to $E$).

\paragraph{Singularities of symmetric solutions in 2D.}
The case where $\M$ is a surface is energy-critical, meaning that blow-up by concentration of energy can occur. If the initial energy is less than some quantity
\begin{equation}\label{quantum}
 E(\varphi )<\epsilon _1\,,
\end{equation}
depending on $\M$ only, then 
the solution $u(t,\cdot )$ of
\eqref{HMF}
is global and uniformly converges towards an harmonic map  $u_\infty$ as $t\to\infty$
(see \cite{eells1964harmonic,struwe1985evolution,kung1989heat}).

Oppositely, the local solution $u$ of \eqref{HMF} may not be defined globally (in the classical sense) if \eqref{quantum} is not fulfilled.
Finite-time blowing-up solutions were provided in \cite{chang1992finite},
for the case $u:[0,T]\times\D\equiv\{x\in\R^2:|x|<1\}\to\mathbb S^2$.
Considering 1-corotational solutions\footnote{In the existing literature, these maps are often called ``equivariant'', or 1-equivariant, although the latter can have by definition an additional degree of freedom $b$, so that $u(r\cos\theta ,r\cos\theta )=R_\theta \T(a(r),b(r),c(r))$ with $a^2+b^2+c^2\equiv1$, $R_\theta $ corresponding to the rotation of angle $\theta $ and axis $\mathbf k$.
The form given above corresponds to the special case where $a(r)=\sin h(r),b(r)=0,c(r)=\cos h(r)$ and should be rather called ``1-corotational'' (see for instance \cite{van2013stability}).
}
of the form $u=u_h$ with
\begin{equation}\label{1-corotational}
u_h(t,x):=\left(\frac{x}{|x|}\sin h(t,|x|);\cos h(t,|x|)\right)\,.
\end{equation}
the system \eqref{HMF} is reduced to a parabolic equation on the scalar map $h(t,r)$:
\begin{equation}\label{eq:h_classical}
\left\{\begin{aligned}
&\partial_t h=\partial_{rr}h+\dfrac{\partial_rh}{r}-\dfrac{\sin2h}{2r^2}\qquad\text{for}\enskip(t,r)\in  [0,T]\times(0,1)\,,\\
&h(t,0)=0 \enskip ,\enskip h(t,1)=\gamma\qquad\enskip\text{for}\enskip t\in[0,T]\,,\\
&h(0,r)=h_0(r)\enskip\qquad\qquad\enskip\text{for}\enskip r\in(0,1)\,,
\end{aligned}\right.
\end{equation}
and a comparison principle for \eqref{eq:h_classical} holds.
In \cite{chang1992finite}, the authors exhibit a class of self-similar, blowing-up subsolutions for the parabolic problem \eqref{eq:h_classical}, implying by comparison:
\begin{equation}\label{blowup_h}
\partial _rh(t,0)\underset{t\to t_*}{\longrightarrow}\infty\,,
\end{equation}
for some $t_*>0$ depending on the initial data.
As described by M. Struwe \cite{struwe1985evolution}, this implies ``forward bubbling'' for $u_h$, namely: as $t\nearrow t_*$, the energy concentrates at the center of $\D$. The solution can be extended in distributional sense after $t_*$ by simply taking the weak limit in $H^1$.%, with a loss of energy.

In \eqref{eq:h_classical}, the number $\gamma $ is the angle between $\varphi |_{\partial \D}$ and the vertical axis, so that setting $\gamma :=0$ corresponds to
\begin{equation}\label{boundary_value_k}
{\varphi}|_{\partial \D}=\mathbf k:=(0,0,1)\,.
\end{equation}
We will work in this homogeneous setting, although in \cite{chang1992finite} the authors assume $\gamma >\pi$ (blow-up for $\gamma =0$ is actually shown in 
\cite{bertsch2002nonuniqueness}).

\paragraph{Stability of blow-up under random perturbations.}
Concerning the deterministic equation \eqref{HMF}, stability results (under perturbation of the initial data) have been obtained in \cite{merle2011blow,raphael2013stable,van2013stability}.
In \cite{merle2011blow}, the authors show the existence, but instability, of initial data leading to blow-up for the Landau-Lifshitz-Gilbert equation (more precisely for $\partial _tu=u\times\Delta u$).
It appears that instability is due to the necessary extra degree of freedom compared to the ``overdamped model'' \eqref{HMF}, for which
a reduction to a scalar problem \eqref{eq:h_classical} is possible.
In this context, P.~Rapha\"el and R.~Schweyer (in case where $\D$ is replaced by $\R^2$) have shown that the pre-blow-up set is stable under small perturbations \textit{in the direction preserving 1-corotational symmetry}.

Whether \eqref{blowup_h} could be observed or not in presence of noise is the main topic treated in the present paper. Note that it is different -- though related -- from the stability results obtained above, for the noise modifies the dynamics, and not the initial data.
Although being a seemingly academic question, it echoes practical issues related to magnetic storage devices (see e.g.\ \cite{braun2000stochastic}), for which blow-up could be thought as spontaneous reversal of magnetization.
The stochastic term in (SLLG) corresponds to thermal fluctuations, see \cite{neel1946bases,brown1963thermal,Berkov}, which in theory are uncorrelated in time and space. A Gaussian white noise acting orthogonally to $u(t,x)$ (hence preserving \eqref{constraint}) has to be added in the equation, giving e.g.\ :
\begin{equation}\label{SHMF1}
\left\{
\begin{aligned}
&\d u=(\Delta u+u|\nabla u|^2)\d t+ u\times\circ \d W \,,
\qquad\text{on}\enskip (0,T]\times\D\,,
\\
&u= \mathbf k\,,
\hspace{7em}\text{on}\enskip \Sigma :=\{0\}\times\D\cup[0,T]\times\partial \D\,,&&
\end{aligned}\right.
\end{equation}
where $u\times$ denotes vector product, and
$t\in\R_+\mapsto W(t)\equiv(w_1(t),w_2(t),w_3(t))\in L^2(\D;\R^3)$ is a Wiener process (with a given covariance in space), whereas ``$\circ$'' means that the Stratonovitch rule is used.

Recent results for (SLLG) -- the equation obtained when the term $u\times\Delta u$ is added to the drift of \eqref{SHMF1} -- in dimensions less than or equal to 3, have been obtained in \cite{brzezniak2013weak,banas2013stochastic,banas2013convergent,GoldysLeTran,alouges2014semi}.
In these works a notion of ``weak martingale solution'' corresponding to that of \cite{DPZ} is used. In particular, solutions belong pathwisely to the space $C([0,T];L^2)\cap L^\infty([0,T];H^1).$ For a direct treatment of \eqref{SHMF1} on the two-dimensional torus, where strong solutions are obtained, we refer the reader to \cite{hocquet2018struwe}.
Strictly speaking, here we will focus on a model different from \eqref{SHMF1}, which is more adapted to our purpose, see \eqref{SHMF} below.

\paragraph{Another version of the Stochastic Harmonic Map Flow.}
It is worth noting that,
as observed in a note of G.C.\ Price and D.\ Williams dating from the 80s \cite{price1983rolling}, if $B\equiv(B_1,B_2,B_3)$ is a Brownian motion in $\R^3,$ then the three-dimensional SDE
\begin{equation}\label{BM_1}
\d X=X\times\circ \d B\,,
\qquad t\in(0,T]\,,
\qquad X_0\enskip \text{given in}\enskip \mathbb S^2\,,
\end{equation}
gives a description of the Brownian motion on the manifold $\mathbb{S}^2\subset\R^3,$ in the sense that its infinitesimal generator agrees with the Laplace-Beltrami operator.
Therefore, at an intuitive level,
the term ``$u\times\circ \d W$'' in \eqref{SHMF1} should be understood as a white noise with values in the ``tangent space of $u$ in $L^2(\D;\mathbb S^2)$'', a difficulty with that terminology being that the latter space admits no infinite-dimensional Riemannian manifold structure (see the related discussion in \cite[Chap.\ II\S6]{eells1964harmonic}).

As for the manifold $\mathbb S^2,$ it is seen from the formula given in 
\cite[Chapter IX, Theorem 1A]{elworthy1982stochastic}
that if we define the standard mobile frame associated to the spherical coordinates of $X\equiv( \cos \theta \sin\phi  , \sin \theta \sin \phi ,\cos \phi ),$ namely:
\begin{equation}\label{frame}
\left[
\begin{aligned}
&\Phi _{\theta ,\phi }:=(\cos \theta \cos \phi ,\sin \theta\cos \phi ,-\sin \phi )\,,\\
&\Theta _{\theta ,\phi }=(-\sin \theta,\cos \theta,0)\,,
\end{aligned}\right.
\end{equation}
then the Brownian motion on $\mathbb{S}^2$ can be locally described by the SDE
\begin{equation}\label{BM_2}
\d X=\Phi_{\theta ,\phi } \circ \d B_1 +\frac{\Theta_{\theta ,\phi }}{\sin\phi }\circ\d B_2\,,
\qquad t\in(0,T]\,,
\qquad X_0\in\mathbb{S}^2\setminus\{\mathbf k\}\,,
\end{equation}

If instead of \eqref{SHMF1} we consider an SDE describing the dynamics of a system of $N$ spins (replace the drift by minus the gradient of a suitable exchange energy), driven by $N$ independent Brownian motions $B^1,\dots ,B^N$ in $\mathbb R^3,$ it is straightforward to see that the law of the solution $u=(u_1,\dots,u_N):[0,T]\to\R^{3N}$ to \eqref{SHMF1} would remain unchanged if we replaced the terms $u_i\circ\times\d B^i$ by that given in \eqref{BM_2} (see also \cite[Prop.\ 3.2]{neklyudov2013role} for related computations).
By analogy with the finite-dimensional case,
given $g,h:[0,T]\times \D\to\R$ and letting (with a slight abuse of notation)
\begin{equation}\label{unknown_g_h}
u=( \cos g\sin h, \sin g\sin h,\cos h)\,,\quad 
\Phi _u:=\Phi_{g,h}\,,\quad
\Theta _u:=\Theta _{g,h}\,,
\end{equation}
then the equation 
\begin{equation}\label{SHMF}\tag{SHMF}
\left\{
\begin{aligned}
&\d u=(\Delta u+u|\nabla u|^2)\d t+\frac{ \Theta_u }{\sin h }\circ \d w_1+\Phi_u\circ \d w_2\,,
\\
&u=\mathbf \varphi \enskip \text{on}\enskip \Sigma\,,
\end{aligned}
\right.
\end{equation}
where as before $w_1$ and $w_2$ are Wiener processes with values in $L^2(\D;\mathbb R),$
appears to be ``more satisfactory'' in the sense that redundancies in the definition of the noise term are avoided (there are only two components in the noise term, not three). Note that in the present article, it is really \eqref{SHMF} that we are dealing with -- actually a degenerate version thereof -- and not \eqref{SHMF1}, although we guess that it should be possible to prove their equivalence in law.
% although it should be possible for infinite-dimensional PDE specialists to prove their equivalence in law.

The case when $w_1$ and $w_2$ are space-time white noises will not be treated in this article. On the one hand it is well-known that parabolic equations of the form \eqref{SHMF} can be ill-posed in dimension two, see e.g.\ \cite{hairer2012triviality}. 
On the other hand, we need the noise to be regular enough to guarantee that the notion of blow-up makes sense. % On the other hand we need enough regularity in space so that blow-up actually makes sense.
We will build solutions that are strong in the probabilistic sense (see Definition \ref{def:weak_sol} below), and sufficiently regular in space, so that the singular time $\tau $ corresponds to the first moment when the solution leaves $C^1(\D)$. Further assumptions on the spatial correlation of $W$ will be done below.

We also point out that the existence of a finite-time blowing-up solution
is mixed with the question of the uniqueness of weak solutions (see \cite{bertsch2002nonuniqueness}),
but this problem will not be adressed here.
\subsection{Main results}
As immediately seen in \eqref{SHMF},
there is no hope to preserve 1-corotational symmetry along the flow if $w_1\neq0$, so that we simply drop this term and set $w:=w_2$. We also assume that $w(t,x)$ \textit{depends only on $t$ and $r\equiv|x|$}, see Fig.\ \ref{fig:explication_equivariant}.
\begin{figure}
\begin{center}
 \includegraphics[width=0.55\linewidth]{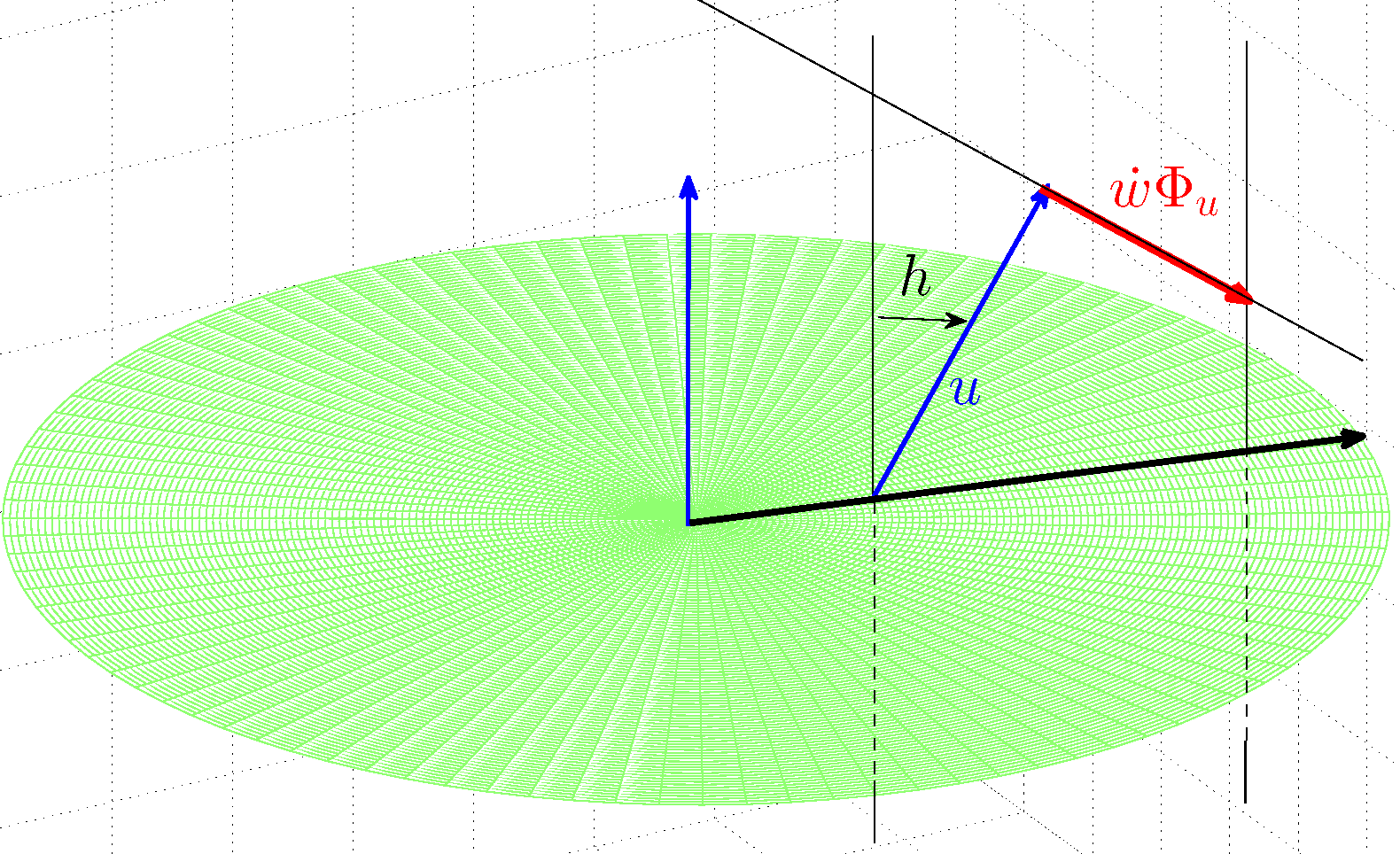}
\caption{``1-corotational noise'', represented in red.}
\label{fig:explication_equivariant}
\end{center}
\end{figure}
Fixing $T>0,$ this leads to the following $1$-corotational preserving version of \eqref{SHMF}:
\begin{equation}\label{SHMFprime}\tag{SHMF'}
\left\{
\begin{aligned}
&\d u=(\Delta u+u|\nabla u|^2)\d t+ \Phi_u\circ \d w_\phi  \enskip,\quad t\in(0,T]\times\D\,,
\\
& u|_{\{t\}\times \partial \D}=\mathbf k\,,\quad\quad\quad \quad\quad \quad    \text{for every}\enskip t\in[0,T]\,,
\\
& u(0)=u_{h_0}\,,
\end{aligned}\right.
\end{equation}
for a radially symmetric Wiener process $w_\phi:\Omega \times[0,T]\to L^2(\D;\mathbb R).$
More precisely, for $(t,x)\in[0,T]\times\D$ we let
\begin{equation}
\label{radially_sym_w}
w_\phi (t,x):=\sum_{k,j\geq 1} B_k(t)\phi_{k,j} e_j(|x|),
\end{equation}
for some coefficients $\phi _{k,j}\in\mathbb R,k,j\geq 1,$
where $(e_k)_{k\geq 1}$ denotes an orthonormal basis of the separable Hilbert space
\begin{equation}\label{nota:H}
H:=\left\{f:[0,1]\to\R:|f|_H^2:=\int_0^1f(r)^2r\d r<\infty\right\}\,,
\end{equation}
while the $B _k$'s are real-valued, independent and identically distributed Brownian motions on a given filtered probability space $(\Omega ,\F,\P,(\F_t)).$ Concerning the coefficients $(\phi _{k,j}),$ we shall at least assume in the sequel that the induced operator
$\phi :L^2(\D;\mathbb R)\to L^2(\D;\mathbb R),$ $h\equiv \sum_{k}h^ke_k(|\cdot |)\mapsto \sum_{j,k}h^k\phi _{k,j}e_j(|\cdot |)$ is  Hilbert-Schmidt.

To state our main results, we need to introduce a few notations.
Recall that the Dirichlet Laplacian $\Delta_D$ is the self-adjoint operator defined by the operation
$\partial _{11}+\partial _{22}$
on the domain
\[
D(\Delta_D ):= W^{2,2}(\D;\R)\cap W^{1,2}_0(\D;\R)\,,
\]
where $W^{2,2},W^{1,2}_0$ are the usual Sobolev spaces, the latter being composed only of elements that vanish on $\partial \D$, in the sense of traces.
We denote by $\left((-\Delta _D)^\alpha ,D\left((-\Delta _D)^{\alpha }\right)\right),\alpha \in\R$ its associated fractional powers defined through the Borel functional calculus,
and we let
\[
\mathscr H^{\alpha }:=D\left((-\Delta _D)^{\alpha /2}\right)\times D\left((-\Delta _D)^{\alpha /2}\right)\times W^{\alpha ,2}(\D;\R)
\]
Given $t\geq 0,$ we denote by $u(t)$ the trace of $u$ onto the time slice $\{t\}\times \D.$ 
For convenience in the statements below we define 
an extended state space $\mathscr {\bar H}^\alpha :=\mathscr H^\alpha \cup\{\vartriangle\},$ where $\vartriangle$ is an isolated point.
Finally, for a stopping time $\tau ,$ we denote by 
\[
\llbracket 0,\tau \rrparenthesis:=\left\{(\omega ,t)\in\Omega \times[0,T]:0\leq t<\tau (\omega )\right\}\,.
\]

We will agree on the following notion of solution.
\begin{definition}\label{def:weak_sol}
We will say that $(u,\tau )$ is a \emph{$1-$corotational, analytically weak solution} of \eqref{SHMFprime}
if $\tau >0$ is a stopping time and if the following properties hold.
\begin{enumerate}[label=(\roman*)]
 \item\label{def_i} The process $u:\Omega \times[0,T]\to \HH^1$ is progressively measurable and has continuous trajectories.
The singular time $\tau $ is caracterized by the property:
\[
u(\omega ,t)=\vartriangle\quad \text{if and only if}\enskip (\omega ,t)\notin \llbracket0,\tau \rrparenthesis\,.
\]
 Moreover, the solution $u$ fulfills the constraint
 \begin{equation}
 \label{constraint_def}
|u(\omega ,t,x)|=1\,,\quad\text{for}\enskip  \P\otimes \d t\otimes \d x-\text{a.e.}\enskip  (\omega ,t;x) \enskip \text{in}\enskip \llbracket0,\tau \rrparenthesis\times\D\,.
\end{equation}
 \item\label{def_ii} There exists a continuous, semi-martingale $h\equiv X+Y\in L^2(\Omega ;C(0,T;H))$ with respect to $(\F_t)$
where
\begin{itemize}
 \item the process $X$ has bounded variation in $V'$ where
$V'$ is the topological dual of the Hilbert space
\begin{equation}\label{nota:V}
V:=\Big\{\varphi \in H: |\varphi |_{V}^2:=\int_0^1 \big(\partial _r\varphi ^2+\frac{\varphi ^2}{r^2}\big)r\d r<\infty\Big\}\,;
\end{equation}
\item the quadratic variation process of $Y$ has finite trace in $H.$
\item for $\d \P\otimes \d t\otimes \d x-$a.e.\ $(\omega ,t;x)$ in $\llbracket0,\tau \rrparenthesis\times\D$ , there holds
\begin{equation}
\label{1-corotational-2}
u(t,x)=u_{h(t,|x|)}:=\left(\frac{x}{|x|}\sin h(t,|x|);\cos h(t,|x|)\right)\,.
\end{equation}
\end{itemize}
\item\label{def_iii} For every compactly supported $\psi$ in $C^1(\D;\R^3),$ and
almost every $(\omega ,t)\in\llbracket0,\tau\rrparenthesis,$ there holds
\begin{multline}\label{eq:weak_u}
\int_{\D}\Big(u(t,x)-u_0(x)\Big)\cdot \psi(x)\d x
=
-\iint_{[0,t]\times\D}\nabla u(s,x)\cdot \nabla \psi(x) \d x\d s
\\
+\iint_{[0,t]\times\D}u(s,x)\cdot \psi(x)|\nabla u(s,x)|^2\d x\d s
-\frac12\sum_{k\geq 1}\iint_{[0,t]\times\D}(\phi e_k(|x|))^2u(s,x)\cdot \psi(x)\d x\d s
\\
+\sum_{k\geq  1}\iint_{[0,t]\times\D}\psi (x)\cdot \Phi_{h(s,x)} \phi e_k(|x|)\d x\d B_k(s)\,,
\end{multline}
where $\Phi_h$ is the tangent vector defined in \eqref{frame}.
\item\label{def_iv} 
For every $t\geq 0,$
a.s.:
\[
u|_{\{t\}\times \partial \D}=\mathbf k\,,
\]
in the sense of traces.
\end{enumerate}
\end{definition}
With this definition at hand, we have the following.
%%%%%%%%%%%%%%%%%%%%%%%%%%%%%%%%%%%%%%%%%%%%%%%%%%%%%%%%%%%%%
%%%%%%%%%%%%%%%%%%%%%% THEOREM 1 %%%%%%%%%%%%%%%%%%%%%%%%%%%%
%%%%%%%%%%%%%%%%%%%%%%%%%%%%%%%%%%%%%%%%%%%%%%%%%%%%%%%%%%%%%
\begin{theorem}[Existence, uniqueness and regularity of the solutions]\label{thm:loc_solv}
Let $T>0,$ fix $4>\beta>4/3,$
and consider a radially symmetric, $L^2(\D)$-valued Wiener process $w_\phi$ as in \eqref{radially_sym_w}.
Assume in addition that $\phi:L^2(\D)\to\mathscr H^{\beta'}$ is well-defined and Hilbert-Schmidt, for some $\beta '>\beta .$

For every $1-$corotational $u_0\equiv u_{h_0}$ in $\mathscr H^\beta$ (see \eqref{1-corotational-2}), there exist a stopping time $\tau^\beta(h_0)>0$ and a
\emph{$1-$corotational analytically weak solution} $(u,\tau ^\beta )$ of \eqref{SHMFprime}.
For this solution, we have the properties:
\begin{enumerate}[label=(\alph*)]
\item\label{sol_1} for $\P-$a.e.\ $\omega \in \Omega ,$ the path $u(\omega )$ belongs to the space $C([0,\tau^\beta);\mathscr H^\beta);$
\item\label{sol_2} if $\omega \in\Omega $ is such that $\tau (\omega )<T,$ then 
\[
\limsup_{t\nearrow\tau ^\beta (\omega )}|u(\omega ,t)|_{\H^\beta }=\infty\,.
\]
 \end{enumerate}
Moreover, the solution $(u,\tau ^\beta )$ is \emph{unique} in the class of $1-$corotational, analytically weak solutions fulfilling \ref{sol_1}-\ref{sol_2}.

Furthermore, the regularity propagates in the sense that
\[
\text{for every}\enskip \beta _1,\beta _2\in (2,4)\,,
\quad 
\tau^{\beta_1}=\tau^{\beta _2}\,.
\]
\end{theorem}
%%%%%%%%%%%%%%%%%%%%%%%%%%%%%%%%%%%%%%%%%%%%%%%%%%%%%%%%%%%%%
%%%%%%%%%%%%%%%%%%%%%%%%%%%%%%%%%%%%%%%%%%%%%%%%%%%%%%%%%%%%%
%%%%%%%%%%%%%%%%%%%%%%%%%%%%%%%%%%%%%%%%%%%%%%%%%%%%%%%%%%%%%
For the next theorem,
we will assume stronger assumptions on the operator $\phi $ (and the initial data).
This is due to the fact that, contrary to the previous statement, we want to ensure that the solution lives in $C^1(\D),$ from the very first time.
Note that, according to Remark \ref{rem:Sobolev_embedding}, a sufficient condition for this is that $u$ have continuous trajectories in $\H^{\beta }$ for some $\beta >2.$

In addition, it is crucial to add the non-degeneracy assumption
\eqref{non_degeneracy_assumption}
on $\phi ,$ so that the associated Ornstein-Uhlenbeck process has dense range (which will ensure irreducibility).
%%%%%%%%%%%%%%%%%%%%%%%%%%%%%%%%%%%%%%%%%%%%%%%%%%%%%%%%%%%%%
%%%%%%%%%%%%%%%%%%%%%% THEOREM 2 %%%%%%%%%%%%%%%%%%%%%%%%%%%%
%%%%%%%%%%%%%%%%%%%%%%%%%%%%%%%%%%%%%%%%%%%%%%%%%%%%%%%%%%%%%
\begin{theorem}[Blow-up]\label{thm:main}
Let $\beta \in(2,4),$ and
let $T,\beta ',\phi ,w_\phi $ be as in Theorem \ref{thm:loc_solv}.
Assume moreover that:
\begin{equation}\label{non_degeneracy_assumption}
\ker\phi^*=\{0\}\,,
\end{equation}
and consider a $1-$corotational $u_0\equiv u_{h_0}$ in the space
$\mathscr H^\beta.$

Then, denoting by $(u,\tau^\beta) $ the solution built in Theorem \ref{thm:loc_solv} above (together with its maximal time of existence), for every $t_*>0,$ we have
\[\P\left(\tau^\beta \leq t_*\right)>0\,.\]
Additionally, blow-up happens in the following sense: for a.e.\ $\omega \in\Omega ,$
\begin{equation}
\label{blowup_happens}
\tau^\beta(\omega ) <T \quad\Longrightarrow\quad\limsup_{t\nearrow\tau^\beta(\omega )}|u(\omega ,t)|_{\mathscr H^{\beta _*}}=\infty\quad \text{for every}\enskip \beta _*>2\,.
\end{equation}
\end{theorem}
\begin{remark}
It is natural to expect that the solution constructed above actually lives in $\H^2,$ up to the singular time. It can be shown, according to a bootstrap argument (see \cite{hocquet2018struwe} for details in a slightly different setting), that provided $\sup_{t\in[0,\sigma )}|\Delta u(t)|_{L^2}<\infty$ for some stopping time $\sigma\in(0,T],$ then $u|_{[0,\sigma ]}$ has arbitrary regularity in space (with respect to what is allowed by the data $u_0,\phi $). Consequently, in \eqref{blowup_happens}, blow-up happens also for $\beta _*=2.$
\end{remark}

%%%%%%%%%%%%%%%%%%%%%%%%%%%%%%%%%%%%%%%%%%%%%%%%%%%%%%%%%%%%%
%%%%%%%%%%%%%%%%%%%%%%%%%%%%%%%%%%%%%%%%%%%%%%%%%%%%%%%%%%%%%
%%%%%%%%%%%%%%%%%%%%%%%%%%%%%%%%%%%%%%%%%%%%%%%%%%%%%%%%%%%%%

\paragraph{Outline of the paper.}
The proof of Theorem \ref{thm:loc_solv} will be adressed in Section \ref{sec:proof:solvability}.
Denoting the parabolic boundary by $\Sigma:=\{0\}\times[0,1]\cup[0,T]\times\{0,1\}$, then \eqref{SHMFprime} writes as an equation on the colatitude $h$ of $u$:
\begin{equation}\label{eq:h_dW_classical}
\d h=\Big(\partial_{rr}h+\frac{\partial_rh}{r}-\frac{h}{r^2}+\frac{2h-\sin2h}{2r^2}\Big)\d t+ \d w_\phi \enskip, \enskip\text{on}\enskip (0,T]\times(0,1)\,,
\end{equation}
where $(h-h_0)|_{\Sigma}=0.$
The fact that \eqref{eq:h_dW_classical} leads to \eqref{SHMFprime}, as well as the converse (namely that any $1-$corotational map $u\equiv u_h$ such that \eqref{SHMFprime} holds is such that $h$ verifies \eqref{eq:h_dW_classical}), will be justified by a generalized It\^o formula, a slightly different version of which can be found e.g.\ in \cite{debussche2016degenerate}.

Due to compensations, when $h$ is solution, we may have $\int_0^1(\frac{\partial _rh}{r}-\frac{\sin2 h}{2r^2})^2r\d r<\infty$ even if both terms of the integrand are not summable separately. This integral behaves as $\int_0^1(\frac{\partial _rh}{r}-\frac{h}{r^2})^2r\d r$, which motivates the introduction of the ``linearized Hamiltonian'' 
\[
A:=\partial _{rr}+\left(\frac{1}{r}\partial _r-\frac{1}{r^2}\right)\,,
\]
whose eigenpairs are related to the Bessel functions of the first kind.

The noise in \eqref{eq:h_dW_classical} is additive, and thus we have $h=v+z$, where $v$ solves the \emph{perturbed equation}:
\begin{equation}\label{perturbedEq}
\partial_t v=Av+\frac{2z+2v-\sin2(v+z)}{2r^2}\,,\enskip \text{on}\enskip (0,T]\times(0,1)\,,
\end{equation}
with $(v-h_0)|_{\Sigma}=0$,
and where $z=z(t,r)$ denotes a generic trajectory $Z(\omega )$ in the support of the solution
of the stochastic linear equation 
\[
\d Z=AZ\d t+\d w_\phi \,\enskip ,\enskip \enskip Z|_{\Sigma}=0\,.
\]
Theorem \ref{thm:loc_solv} will be proved using Picard's contraction mapping principle, at the level of the equation \eqref{perturbedEq} on a suitable subspace of $C([0,T];H)$ for $T$ small enough, using that the support of $Z$ as above is known.

Theorem \ref{thm:main} is proved in Section \ref{sec:proof_main_thm}.
Denoting by $h=h(h_0,Z)$ the local solution $v+Z$ of \eqref{eq:h_dW_classical}, it is obtained a consequence
of the existence of a ``nice'' pre-blow-up set $\mathfrak H$, namely a set of initial data $h_0$ such that:
(a) states in $\mathfrak H$ are reachable by the Markov Chain $h(h_0,Z,t)$ (in a sense precised below);
(b) the solutions starting from $h_0\in\mathfrak H$ blow up in finite time, with positive probability.
Part of the property (a) will be formulated in Section \ref{sec:proof:solvability}, as it appears as a natural consequence of the fixed point argument. Section \ref{sec:proof_main_thm} is mainly devoted to the proof of property (b) (whose precise statement is Proposition \ref{mainProp}). Proposition \ref{mainProp} is the core of the argument.
We point out that the topological argument used in Subsection \ref{subsec:globalization} to obtain existence of blowing-up solutions appears to be new in this context, and could perhaps be used for other SPDEs.
Technical facts related to local solvability and the comparison principle for \eqref{perturbedEq} are treated in the appendices.
\subsection{General notation and framework}
We denote by $\I$ the compact interval $[0,1]$.
For $1\leq p <\infty$, the notation $L^p_{r\d r}$ will be used to designate the Banach space of real valued measurable maps $r\mapsto f(r),\enskip r\in {\I}$, such that $|f|_{L^p_{r\d r}}:=(\int_I|f(r)|^pr\d r)^{1/p}<\infty$.
The special case $H=L^2_{r\d r}$, $|\cdot |_{H}$, defines a Hilbert space for the inner product
$f,g\in H\longmapsto\langle{f,g}\rangle =\int_If(r)g(r)r\d r$.

We need to introduce some functional spaces.
Let $A:D(A)\subset H\to H$ be the unbounded linear operator given by
\begin{empheq}[left=\empheqbiglbrack]{align}
\nonumber
&D(A)=\Bigg\{f\in H\,:\enskip \int_I\left[(\partial_{rr}f)^2+\big(\frac{\partial_rf}{r}-\frac{f}{r^2}\big)^2\right]r\d r<\infty\,,
\\
\label{nota:D_A}
&\quad \quad \quad \quad \quad \quad \quad \quad \quad 
\quad \quad \quad \quad \quad 
f\in C(\I) \enskip \text{and}
\enskip f(0)=f(1)=0\Bigg\},
\\
\label{nota:A}
&Ah=\partial_{rr}h+\left(\frac{1}{r}\partial_r-\dfrac{1}{r^2}\right)h\enskip,\quad h\in D(A)\,,
\end{empheq}
which has eigenpairs $\{(e_k,\lambda _k)\,,\,k\geq 1\}$ with $(e_k)$ forming an orthonormal basis of $H$, while the values $\lambda _k$ are negative and asymptotically quadratic in $k$ (see Section \ref{sec:proof:solvability}).
For $f,g\in D(A),$
we have the integration by parts formula:
\begin{equation}\label{ipp}
\int_{I}-Afgr\d r= \int_I\left(\partial_rf \cdot \partial_rg +\frac{f }{r}\frac{g }{r}\right)r\d r\equiv\int_I \nabla _r f\cdot \nabla _rgr\d r\,,
\end{equation}
where we introduce the nabla operator $\nabla _r:V\to H\times H,$ defined for $f\in V$ (see \eqref{nota:V}) as:
\begin{equation}\label{nota:gradient}
\nabla_r h(r):= \left(\partial _rh(r),\frac{h(r)}{r}\right)\,,\quad r\in\I\setminus\{0\}\,.
\end{equation}
As an immediate consequence, 
$A$ is symmetric. Furthermore, it is self-adjoint (see Section \ref{sec:proof:solvability}), so that from \cite[Theorem VIII.6]{reed1980methods}, we can define, when $\beta\geq 0$, the fractional power $(-A)^{\beta /2}.$ Explicitly, we have the formula
\begin{equation}\label{nota:A_beta}
(-A)^{\beta /2}h:=\sum_{k\in\mathbb{N}}(-\lambda _k)^{\beta/2 }\langle h\,,\,e_k\rangle e_k\,,
\end{equation}
for every $h$ in $V_\beta$ where
\begin{equation}\label{nota:V_beta}
V_\beta := D((-A)^{\beta /2})\equiv \left\{h\in H\,,\enskip  |h|_\beta ^2:=\sum\nolimits_{k\in\mathbb{N}}(-\lambda _k)^{\beta}\langle{h,e_k}\rangle ^2<\infty\right\}\,.
\end{equation}
At this point, it is worth noting, from \eqref{ipp}, that
\[
\langle-A f,f \rangle=|(-A)^{1/2}f|^2_{H}=|\nabla _r f|^2_{H}\,,
\]
and hence the topological spaces $V$ and $V_1\equiv D\left((-A)^{1/2}\right)$ coincide.

For $\beta \geq 0$, the norm in $C([0,T];V_{\beta})$ (i.e.\ the space of continuous functions with values in $V_\beta $), will be denoted by the double bars $\|\cdot \|_{T,\beta }$, namely if $z\in C([0,T];V_{\beta})$ we write
\begin{equation}\label{nota_double_bar}
\|z\|_{T,\beta}:=\sup_{0\leq t\leq T}|z(t)|_{\beta}\,.
\end{equation}

In the whole paper, we consider a filtered probability space $(\Omega,\mathcal{F},\P,(\mathcal{F}_t)_{t\geq0})$ satisfying the usual conditions. Note that the couples $(V_\beta ,|\cdot |_\beta )$ form separable Hilbert spaces, and thus by the classical theory of SPDE's \cite{DPZ},
the adapted $H$-valued Wiener process
\begin{equation}\label{wiener_process_chap4}
w_\phi(t)=\sum_{k\in\mathbb{N}}B_k(t)\phi e_k\enskip,
\end{equation}
where $(B_k)_{k\in\mathbb{N}}$ stands for a sequence of real-valued independent brownian motions in time,
$(e_k)_{k\in\mathbb{N}}$ is an ONB of $H$, and $\phi:H\to V_\beta $ is a Hilbert-Schmidt operator,
has continuous paths in the space $V_\beta$, with full probability.

The space of Hilbert-Schmidt operators from $H$ into some Hilbert space $K$ will be denoted by $\mathbb{L}_2(H;K)$.

Given a subset $S$ of a topological space $X$, we recall that the interior of $S$, which we denote by $\interior S$, consists of all points of S that do not belong to the boundary of $S$.

\begin{remark}\label{rem:equiv_A_Delta}
For $f\in H$, if $x\equiv r(\cos\theta,\sin\theta)\in\D$, define $F:\D\to\R^2$ by $F(x)=(f(r)\cos\theta,f(r)\sin\theta)$.
We have $|f|_{H}=(2\pi)^{1/2}|F|_{L^2(\D;\R^2)}$ and if $f\in V_2$, then $F\in D(\Delta )\equiv W^{2,2}\cap W^{1,2}_0$ with 
\begin{equation}\label{Delta_F}
\Delta F=(Af\cos\theta,Af\sin\theta)\,.
\end{equation}
Plugging the ansatz above in $\nabla ^2 F$, there holds in addition:
\begin{equation}\label{plugg_ansatz}
\int_{\D}|\nabla^2 F|^2\d x=2\pi\int_0^1\left(\partial_{rr}f\right)^2r\d r+4\pi\int_0^1\Big(\frac{\partial_rf}{r}-\frac{f}{r^2}\Big)^2r\d r\,.
\end{equation}
By a classical inequality, \eqref{plugg_ansatz} justifies that the norms
$|\partial_{rr}f|_H+\big|\left(\frac{\partial_r}{r}-\frac{1}{r^2}\right)f\big|_H$  and $|f|_2\equiv\enskip|Af|_H$, are in fact equivalent on $V_2$.

Furthermore, it follows from complex interpolation theory (see \cite[Chap.\ 4]{lunardi2009interpolation}) that
$F\in D\left((-\Delta )^{\beta /2}\right)^2$ if and only if $f\in V_\beta.$ 
Letting $f=\sin h$ for some $h\in V_2,$ then direct computations show that,
provided $h$ is bounded then
$|\nabla _rf|_{H}<\infty\Leftrightarrow |\nabla _r h|_{H}<\infty,$ hence $|(-A)^{1/2}f|_{H}<\infty\Leftrightarrow |(-A)^{1/2}h|_{H}<\infty.$ 
Therefore, by giving the characterization of Sobolev spaces in terms of complex interpolation, it holds that for $\beta >1:$
\begin{equation}
\label{equivalence_u_h}
u_h\in \H^\beta \quad \text{if and only if}\quad h\in V_\beta \,,
\end{equation}
(recall \eqref{1-corotational-2}).
\end{remark}
\begin{remark}\label{rem:Sobolev_embedding}
For $p\in[1,\infty),\beta\in\R$, $f\in V_\beta$, if $\beta<1$ and if
\[1\leq p\leq p^*=\frac{2}{1-\beta}\enskip,\]
the classical Sobolev Embedding Theorem in dimension $2$ (see \cite{adams2003sobolev}) implies that
$|F|_{L^p(\D)}\lesssim|(-\Delta )^{\beta/2}F|_{L^2(\D)}$, where we use the notations of Remark \ref{rem:equiv_A_Delta}. Since
$|F|_{L^p(\D)}=\left(2\pi\right)^{1/p}\left|f\right|_{L^p_{r\d r}}$, and $|(-\Delta )^{\beta/2}F|_{L^2(\D)}=|f|_{\beta}$, it is straightforward that we have the continuous embedding:
 $V_\beta\hookrightarrow L^p_{r\d r}$.
Similarly if $\beta>1$, then
$V_\beta\hookrightarrow C({\I};\R)$.
In addition, by the formula 
\begin{equation}\label{plugg_ansatz_nabla}
|\nabla F|^2=(\partial_rf)^2+\frac{f^2}{r^2}\,,
\end{equation}
then for any $\beta>2$, there exists $c_\beta>0$ such that for all $f\in V_\beta$,
$\left|\partial_rf\right|_{L^{\infty}}\leq c_\beta\left|f\right|_{\beta}$.
\end{remark}
\section{Proof of Theorem \ref{thm:loc_solv}}
\label{sec:proof:solvability}
\subsection{Treatment of the associated scalar problem, interpolation lemma}
Given $\beta\geq 0$ and $h_0\in V_\beta$
equation \eqref{eq:h_dW_classical} can be written as the infinite-dimensional equation in $H:$ 
\begin{equation}\label{eq:h_dW_functional}
\begin{cases}
\d h=\big(Ah+b(r,h(t,r))\big)\d t+ \d w_\phi\enskip ,& \text{for}\enskip t\in\R^+\enskip,
\\
h(0)=h_0\enskip,&
 \end{cases}
\end{equation}
where `` $\d $ '' denotes It\^{o} differential, whereas for $f\in H$, we denote by $b(r,f(r))$ the nonlinearity:
\begin{equation}\label{nota:b}
b(r,f(r))=\dfrac{2f(r)-\sin2f(r)}{2r^2}\,,\enskip r\in \I\setminus\{0\}\,,
\end{equation}
which will be sometimes abbreviated as $b_f.$
Note that $b_f$ is not always well-defined as an element of $H.$
However, assuming the existence of $\beta >4/3,$ such that $f\in V_\beta ,$ then
$|b_f|_{H}<\infty$
(see \eqref{P2} below) hence $b_f$ is well defined.

It is well known that the spectrum of $A$ (identified with its standard complexification) consists exclusively of eigenvalues whose associated vectors are proportionnal to $J_1(x_k\cdot ),k\geq 1,$ where $J_1$ is the \emph{order one Bessel function of the first kind}, smooth solution to the ODE
\[
\begin{cases}
  y^2\dfrac{\d ^2J_1}{\d y^2}+y\dfrac{\d J_1}{\d y}+(y^2-1)J_1=0\enskip,\quad \text{for}\enskip y>0\enskip,
  \\
  J_1(0)=0\,,
 \end{cases}
\]
and where $(x_k),k\geq 1$ is a countable part of $\R_+\setminus0,$ forming the zeros of $J_1.$ It is a well known fact that, if we arrange them in ascending order (we will do this assumption in the sequel), then the $x_k$'s are asymptotically linear in $k\in\mathbb{N}^*$.
For $k\in\mathbb{N}^*$, the mappings
\begin{equation}
\label{nota:ek}
e_k:=\left(r\mapsto \frac{1}{|J_1(x_k\cdot)|_H}J_1(x_kr)\enskip,\enskip r\in{\I}\right)\enskip,
\end{equation}
define a family $(e_k)_{k\in\mathbb{N}^*}$ of eigenvectors of $A$, with associated eigenvalues $\lambda _k:=-(x_k)^2$, $k\in\mathbb{N}^*$, which forms an orthonormal basis of $H.$

In particular, the spectrum of $A$ lies in a sector containing the negative real axis, so that it generates an analytic semigroup $S.$ Furthermore, the following inequality holds, for any $\beta \geq 0:$
\begin{equation}
\label{P1}
|(-A)^{\alpha } S(t)|_{\mathscr{L}(V_{\beta})}\leq c t^{-\alpha }\,,\quad \text{for all}\enskip t>0\enskip\text{and every}\enskip \alpha\geq 0\,,\\
\end{equation}
for a constant depending on $\alpha$ (we refer the reader, e.g.\ to \cite[\S 2.6]{pazy1983semigroups}).

The proof of local solvability relies mainly on \eqref{P1}, together with
suitable estimates of $b_h$ in the scale $(V_\beta )$ (see Corollary \ref{cor:P} below).
The following interpolation lemma, which is needed for obtaining such estimates,
is based on expansion of functions in terms of their so-called Fourier-Bessel series (see \cite[chap.~18]{watson1995treatise}), that is according to the basis defined in \eqref{nota:ek}.
\begin{lemma}\label{lem:alpha}
Let $p\in[1,\infty]$, $\nu\in\R$.
\begin{enumerate}[label=(\roman*)]
 \item\label{lem:alpha:item_i}
Take $\nu\leq2/p+1$ and define the operator $T:D(T)\subset V_\beta\rightarrow L^p_{r\d r}$ by $Tf=\dfrac{f}{r^\nu}$ for 
$f\in D(T):=\{\varphi\in V_\beta, \left|\frac{\varphi}{r^\nu}\right|_{L^p_{r\d r}}<\infty\}$.

Provided $\beta>(1+\nu-2/p)\vee1/2$, then $T$ has a bounded extension
\[
\begin{cases}
T:V_\beta \longrightarrow L^p_{r\d r}\quad \text{if}\enskip p<\infty\enskip \text{and}\enskip \nu<2/p+1\,;\\
T:V_\beta \longrightarrow L^{\infty}\quad \text{if}\enskip p=\infty\enskip \text{and}\enskip \nu\leq 1\,.
\end{cases}
\]
\item\label{lem:alpha:item_ii}
 Similarly, the linear map
$\partial_r:D(\partial_r):=\{\varphi\in V_\beta,|\partial_r\varphi|_{L^p_{r\d r}}<\infty\}\longrightarrow L^p_{r\d r}$,
has a bounded extension $\partial_r: V_\beta\rightarrow L^p_{r\d r},$
provided $\beta>(2-2/p)\vee 3/2.$
\end{enumerate}
\end{lemma}
\begin{proof}[Proof of Lemma \ref{lem:alpha}]
\indent\textit{Case $p=\infty$}
According to \eqref{plugg_ansatz_nabla} the following bound holds, provided $\beta >2$:
\[
\sup_{r\in \I}\big\{(\partial _rf(r))^2 +\frac{f(r)^2}{r^2}\big\}\leq c_\beta |f|_{\beta }^2\,,\quad\text{for all}\enskip f\in V_{\beta }\,,
\]
for some $c_\beta >0$. This yields both \ref{lem:alpha:item_i} (when $\nu \leq 1$) and \ref{lem:alpha:item_ii}.

\item[\indent\textit{Proof of \ref{lem:alpha:item_i}}.]
Let $p\in[1,\infty)$.
Using the orthonormal basis defined in \eqref{nota:ek}, and for $k\geq1$ setting $c_k:=|J_1(x_k\cdot )|_H^{-1}$, one has by \eqref{nota:ek}:
\[\big|\frac{1}{r^\nu} e_k\big|_{L^p_{r\d r}}^2=(c_k)^2\big|\frac{1}{r^\nu}J_1(x_k\cdot )\big|_{L^p_{r\d r}}^2
=(c_k)^2(x_k)^{2\nu-4/p}\left(\int_0^{x_k}\frac{|J_1(y)|^p}{y^{p\nu-1}}\d y\right)^{2/p},\]
where we have done the change of variable $y=x_kr$.
 Using classical properties of Bessel functions, see \cite[chap.~7]{watson1995treatise}, there exist constants $c,c'>0$ such that
 \begin{equation}\label{bessel_prop}
 J_1(y)\leq cy\,,\enskip\forall y\in{\I}\quad \text{and}\quad|J_1(y)|\leq c'y^{-1/2}\,, y\in[1,\infty)\,.
\end{equation}
By \eqref{bessel_prop}, we obtain that for $\nu\in\R$, provided every term below is finite:
\begin{equation}\label{ineq:asymptotics_int_J1}
\int_0^{x_k}\frac{|J_1(y)|^p}{y^{p\nu}}y\d y\leq c''\left(\int_0^1y^{p-p\nu+1}\d y+\int_1^{x_k}y^{-p/2-p\nu+1}\d y\right)\,.
\end{equation}
Since $x_k$ is asymptotically linear in $k\geq 1$ (\cite[p.\ 503-510]{watson1995treatise}), the right hand side of \eqref{ineq:asymptotics_int_J1} remains bounded independently of $k$ if and only if $2/p-1/2<\nu<2/p+1.$
Noticing furthermore that $(c_k)^2\equiv|J_1(x_k\cdot )|_{H}^{-2}=\mathcal{O}(k)$ (this is left to the reader), we have
\[\big|\frac{1}{r^\nu}e_k\big|_{L^p_{r\d r}}^2=
\begin{cases}
\mathcal{O}(k^{1+2\nu-4/p})\quad\text{if}\enskip \frac2p-\frac12<\nu<\frac{2}{p}+1\,,\\
\mathcal{O}(1)\quad\text{if}\enskip \nu\leq\frac2p-\frac12\,.
\end{cases}
\]
Using now triangle and Cauchy-Schwarz inequalities on the Fourier-Bessel series of $f\in V_\beta$, we have formally
\begin{equation}\label{ineq:T_L2_Lp}
|Tf|_{L^p_{r\d r}}\leq\sum_{k\geq1}|\langle{f,e_k}\rangle ||Te_k|_{L^p_{r\d r}}
\leq|f|_{\beta}\Big(\sum_{k\geq1}(x_k)^{-2\beta}|Te_k|_{L^p_{r\d r}}^2\Big)^{1/2}.
\end{equation}
Taking $\beta>1+\nu-2/p$ gives a convergent series in \eqref{ineq:T_L2_Lp} in the case $2/p-1/2<\nu<2/p+1$, whereas $\beta >1/2$ is sufficient when $\nu\leq2/p-1/2$.
In both cases, we obtain a continuous extension $T:V_{\beta}\rightarrow L^p_{r\d r}.$

\item[\indent\textit{Proof of \ref{lem:alpha:item_ii}}.]
The proof of the second assertion is similar,
using that $|\partial_re_k|_{L^p_{r\d r}}=c_kx_k|J_1'(x_k\cdot )|_{L^p_{r\d r}}$.
The well-known identity 
$J_1'(y)=J_0(y)-J_1(y)/y,$  $y\geq0$ (see \cite[p.~17-19]{watson1995treatise}),
shows in particular that $J_1'(x_k\cdot )$ defines an element of $L^p_{r\d r}$ near the origin. For some constant $c>0$ we obtain
\begin{equation}\label{int_J1prime}
\left|\partial_re_k\right|_{L^p_{r\d r}}^2\leq ck^{3-4/p}\left(|J'_1|_{L^p_{r\d r}([0,1])}^p+\int_1^{x_k}\Big|J_0(y)-\frac{J_1(y)}{y}\Big|^py\d y\right)^{2/p}\,.
\end{equation}
Now, as for $J_1$ there exists $c'>0$, such that:
$J_0\leq c'y^{-1/2}$, the other term $J_1/y$ being smaller at infinity.
Therefore, in case $p>4$, the integral in \eqref{int_J1prime} is bounded, so that inequality \eqref{ineq:T_L2_Lp}, with $T:=\partial_r$ and $\beta>2-2/p$, gives the result.
Otherwise if $p\in[1,4]$, we have $|\partial _re_k|^2_{L^p_{r\d r}}=\mathcal O(k^{4/p-1})$, and it is sufficient to take $\beta >3/2$.
\end{proof}
We can now state our main estimates on $b.$
\begin{corollary}
\label{cor:P}
For $\beta >4/3,$
we have the estimates
\begin{align}
\label{P2}
&|b_{v}|_{H}\leq c'|v|_{\beta}^3\,,\qquad\text{for all}\enskip v\in V_\beta \,,
\\
\label{P3}
&|b_{u}-b_{v}|_{H}\leq c''|u-v|_{\beta}(|u|_{\beta}^2+|v|_{\beta}^2)\,,\quad\text{for all}\enskip u,v\in V_{\beta} \,,
\end{align}
with constants depending on $\beta$ only.
\end{corollary}
\begin{proof}[Proof of Corollary \ref{cor:P}]
Denoting by
$F:x\mapsto x-\sin2x/2,\enskip x\in \R$, and using the inequality $|F(x)|\leq c|x|^3$, $x\in\R$, for a certain $c>0$, we have by an application of Lemma \ref{lem:alpha}--\ref{lem:alpha:item_i} with $\nu=2/3$, $p=6$:
\[\big|b_{v}\big|_{H}=\big|\frac{F(v)}{r^2}\big|_{H}\leq c\big|\frac{v}{r^{2/3}}\big|_{L^6_{r\d r}}^3\leq c'\big|v\big|_{\beta}^3\enskip,\]
as soon as $\beta>4/3$, which shows \eqref{P2}.

Similarly, using that for some $c>0$,
$|F(x)-F(y)|\leq c|x-y|(x^2+y^2)$, $\forall x,y\in\R$, then H\"{o}lder's inequality implies:
\[
\begin{aligned}
|b_{u}-b_{v}|_{H}&\leq c\Big|\big|\frac{u-v}{r^{2/3}}\big|\Big(\big(\frac{u}{r^{2/3}}\big)^2+\big(\frac{v}{r^{2/3}}\big)^2\Big)\Big|_{H}\\
&\leq c\big|\frac{u-v}{r^{2/3}}\big|_{L^6_{r\d r}}\Big(\big|\frac{u}{r^{2/3}}\big|_{L^6_{r\d r}}^2+\big|\frac{v}{r^{2/3}}\big|_{L^6_{r\d r}}^2\Big)\,.
\end{aligned}
\]
An application of Lemma \ref{lem:alpha}--\ref{lem:alpha:item_i} with the same parameters as above leads to \eqref{P3}.
\end{proof}

\subsection{Construction of a mild solution}
In view of the previous analysis,
we will first solve \eqref{eq:h_dW_functional} under the \emph{mild form:}
\begin{equation}\label{eq:mild_h}
h(t)=S(t)h_0 + \int_0^tS(t-s)b_{h}(s)\d s + \int_0^t S(t-s)\d w_\phi(s)\,,\enskip\text{for}\enskip t\in[0,\tau^\beta)\,,\enskip \text{a.s.}\,,
\end{equation}
where $S\equiv e^{\cdot A}$ is the semigroup generated by $A$, each integral above being understood in the Bochner sense, in $V_\beta $ for some $\beta >4/3.$
More precisely, our aim here is to show the following.
\begin{claim}\label{clm:LocalSolvability-h}
Let $4>\beta>4/3$, $\beta '>\beta ,$
and take $\phi\in\mathbb{L}_2(H,V_{\beta'}).$
Then, for $h_0\in V_\beta$, there exist a stopping time $\tau^\beta(h_0)$, and a unique $h$ with paths in $C([0,\tau^\beta);V_\beta)$, a.s.~, mild solution of \eqref{eq:h_dW_functional}.
The stopping time $\tau^\beta$ is maximal in the sense that for every $R>0,$ $\tau ^\beta \geq  \inf\{t\in[0,T], |h(t)|_{\beta }\geq R\}.$
\end{claim}
\begin{proof}[Proof of Claim \ref{clm:LocalSolvability-h}]
We restrict our proof to the case $\beta \in(4/3,2]$. Higher regularity, as well as the propagation, are treated in Appendix \ref{app2}.
Fix $T>0$. For $\omega\in\Omega$ and $t\in[0,T]$, define the Ornstein-Uhlenbeck process
\begin{equation}\label{nota:Z}
Z(\omega ,t)=\int_0^tS(t-s) \d w_\phi(s)\,,\enskip\omega\in\Omega\,,\enskip t\geq 0\,.
\end{equation}
For an analytical semi-group $S,$ since by assumption $\phi\in\mathbb{L}_2(H,V_{\beta'})$ with $\beta '>\beta -1,$ then
it is standard that:
\[
Z\enskip  \text{is a random variable supported in the space}\quad C([0,T];V_{\beta})\,,
\]
see \cite[\S6]{DPZ}.
Therefore we can take $z\in C([0,T];V_{\beta})$, and argue pathwise, 
considering the translated equation \eqref{perturbedEq} with unknown $v$.
For $h_0\in V_{\beta}$, if a solution $v$ exists up to $\tau =\tau (z)>0$, it is well-known that $h:=v+z$ gives a solution of \eqref{eq:h_dW_functional} on $\{Z|_{[0,\tau ]}=z|_{[0,\tau ]}\}$.
Thus, for each $z\in C([0,T];V_{\beta})$, we aim to find a fixed point $v$ for the map $\Gamma=\Gamma_{h_0,z,T}$, defined as
\begin{equation}\label{nota:contraction}
\Gamma(v)(t):=S(t)h_0+\int_0^tS(t-s)b_{v+z}(s)\d s\,,\enskip \text{for}\enskip t\in[0,T]\,.
\end{equation}
We will show that if $T_*>0$ is sufficiently small, depending only on $\|z\|_{T,\beta}$ and $|h_0|_{\beta}$, then the mapping $\Gamma$ is a contraction of a certain ball of $C([0,T_*];V_{\beta})$.

Consider any $z$ as above, and $h_0\in V_\beta$.
If $v\in C([0,T];V_{\beta})$, taking the $V_\beta$-norm in \eqref{nota:contraction} and using \eqref{P1} and \eqref{P2} gives:
\begin{equation}
\label{nota:T_star_1}
 \|\Gamma(v)\|_{T,\beta}\leq |h_0|_\beta+c_1T^{1-\beta/2}(\|v\|_{T,\beta}^3+\|z\|_{T,\beta}^3)\,.
\end{equation}
Then, using \eqref{nota:contraction}, for $u,v\in C([0,T];V_{\beta})$, we have by \eqref{P1} and \eqref{P3}:
\begin{equation}\label{nota:T_star_2}
\|\Gamma(u)-\Gamma(v)\|_{T,\beta}\leq c_2T^{1-\beta/2}(\|u\|^2_{T,\beta }+\|v\|_{T,\beta}^2+2\|z\|_{T,\beta}^2)\|u-v\|_{T,\beta}\,.
\end{equation}
Set $R:=|h_0|_\beta\vee \|z\|_{T,\beta}+1.$
Letting
\begin{equation}\label{T_star}
T_*:=\min\left(\frac{1}{4c_1R^3},\frac{1}{8c_2R^2}\right)^{1/(1-\beta /2)}\,,
\end{equation}
then \eqref{nota:T_star_1} and \eqref{nota:T_star_2} yield respectively $\|\Gamma(v)\|_{T_*,\beta }\leq R-1/2$ and $\|\Gamma(u)-\Gamma(v)\|_{T_*,\beta }\leq(1/2)\|u-v\|_{T_*,\beta }$, so that:
\begin{itemize}
 \item the ball $\mathbb B^{R}\subset C([0,T_*];V_\beta )$ centered at $0$ and of radius $R$ is left invariant by $\Gamma_{h_0,z,T_*}$;
 \item $\Gamma_{h_0,z,T_*}:\mathbb B^{R}\to\mathbb B^{R}$ is a contraction.
\end{itemize}
Applying Picard Theorem (the underlying space is complete), there exists a unique fixed point $v(h_0,z)$ for $\Gamma_{h_0,z,T_*}$, a mild solution to the perturbed equation \eqref{perturbedEq}, up to $t=T_*$. The maximal solution is obtained by reiteration of the argument.
\end{proof}
Fixing $h_0\in V_\beta$ and  $z\in C([0,T_*];V_\beta )$,
the above proof shows that if $R:=|h_0|_\beta\vee \|z\|_{T,\beta}+1$ and $T_*(R)$ is as in \eqref{T_star}, then the unique fixed point of $\Gamma_{h_0,z,T_*}|_{\mathbb B^R}$, which we denote by $v_0$, depends continuously on $z|_{[0,T_*]}$ and $h_0$.
Indeed, first note that by \eqref{T_star} we have
\begin{equation}\label{relation_fix}
\|\Gamma_{h_1,\zeta}(v)\|_{T_*,\beta }\leq R-\frac14\enskip \text{and}\enskip \|\Gamma_{h_1,\zeta }(u)-\Gamma_{h_1,\zeta }(v)\|_{T_*,\beta }\leq\frac34\|u-v\|_{T_*,\beta },
\end{equation}
for $(h_1,\zeta )$ lying in some neighbourhood $\mathcal{V}\times\mathcal{W}$ of $(h_0,z)$. By the previous analysis, the bound \eqref{relation_fix} guarantees the existence of the unique fixed point $v_1$ of $\Gamma _{h_1,\zeta ,T_*}$.
For such $(h_1,\zeta )$, using that  $v_0=\Gamma_{h_0,z}(v_0)$, $v_1=\Gamma_{h_1,\zeta }(v_1)$, and re-using the properties \eqref{P1}-\eqref{P2}-\eqref{P3}, we immediately obtain
\[\|v_0-v_1\|_{T_*,\beta}\leq |h_0-h_1|_\beta
 +cT_*^{1-\beta/2}R^2
\Big(\|v_0
 -v_1\|_{T_*,\beta}+\|z-\zeta\|_{T_*,\beta}\Big)\enskip,\]
so that the continuity of $v$ at $(h_0,z)\in \mathcal{V}\times\mathcal{W}$ follows. This eventually gives the continuity for $h:=v+z$, locally on $[0,T_*]$.
The continuity of these functionals remains true up to the maximal times, as stated in the next lemma (the proof is postponed to Appendix \ref{app2}).
\begin{lemma}[Continuous dependence]\label{lem:c_dep_result}
Let $T>0$, $z\in C([0,T];V_{\beta})$, $h_0\in V_\beta$ and assume that $h(h_0, z,\cdot )$ exists on $[0,T]$.
There exist open sets $\mathcal{V}\subset V_\beta $ and $\mathcal{W}\subset C([0,T];V_{\beta})$, with $(h_0,z)\in\mathcal{V}\times\mathcal{W}$, such that
for all $(h_1,\zeta )\in\mathcal{V}\times\mathcal{W},$ there exists a unique mild solution $h(h_1,\zeta ,\cdot )\in C([0,T];V_{\beta})$ of \eqref{eq:h_dW_functional}.

Moreover, the mapping 
$\mathcal{V}\times\mathcal{W}\to C([0,T];V_{\beta})$, $(h_1,\zeta) \mapsto h(h_1,\zeta ,\cdot )|_{[0,T]}$,
is continuous.
\end{lemma}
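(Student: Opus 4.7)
The plan is to bootstrap the local continuity on $[0,T_*]$ already derived in the discussion right after the proof of Proposition \ref{pro:LocalSolvability-h} into global continuity on $[0,T]$, by iterating the fixed-point step on a uniform partition of $[0,T]$ whose mesh is controlled by a single a priori bound.

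First I would set $M:=\|h(h_0,z,\cdot)\|_{T,\beta}+\|z\|_{T,\beta}$, which is finite by hypothesis, fix $R:=M+2$, and let $T_*:=T_*(R)$ as in \eqref{T_star}. Next I pick $N\in\N$ with $T/N<T_*$ and set $t_k:=kT/N$ for $0\leq k\leq N$. The core argument is an induction on $k$: I claim there exist open neighbourhoods $\mathcal{V}_k\ni h_0$ in $V_\beta$ and $\mathcal{W}_k\ni z$ in $C([0,T];V_\beta)$ such that for every $(h_1,\zeta)\in\mathcal V_k\times\mathcal W_k$, the solution $h(h_1,\zeta,\cdot)$ exists on $[0,t_k]$, the restriction $(h_1,\zeta)\mapsto h(h_1,\zeta,\cdot)|_{[0,t_k]}$ is continuous into $C([0,t_k];V_\beta)$, and moreover
\[
\sup_{s\in[0,t_k]}|h(h_1,\zeta,s)-h(h_0,z,s)|_\beta+\|\zeta-z\|_{T,\beta}<1.
\]
This last inequality yields in particular $\|h(h_1,\zeta,\cdot)\|_{t_k,\beta}\vee\|\zeta\|_{T,\beta}<R-1$, preserving the cushion required at the next step.

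For the inductive step $k\to k+1$, the semigroup property shows that the function $s\mapsto h(h_1,\zeta,t_k+s)-\zeta(t_k+s)$ is, on $[0,T-t_k]$, a fixed point of $\Gamma_{\tilde h_0,\tilde\zeta,T_*}$ in the sense of \eqref{nota:contraction}, with $\tilde h_0:=h(h_1,\zeta,t_k)-\zeta(t_k)$ and $\tilde\zeta:=\zeta(t_k+\cdot)$. The previous bound gives $|\tilde h_0|_\beta\vee\|\tilde\zeta\|_{T-t_k,\beta}<R$, so the local existence statement of Proposition \ref{pro:LocalSolvability-h} together with the continuity of the fixed point in the ball $\mathbb B^R$ (established in the paragraph following that proof) produces a solution on $[t_k,t_{k+1}]\subset[t_k,t_k+T_*]$ that depends continuously on $(\tilde h_0,\tilde\zeta)$. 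Composing with the continuous map $(h_1,\zeta)\mapsto(\tilde h_0,\tilde\zeta)$ supplied by the inductive hypothesis, and gluing with the continuity on $[0,t_k]$, yields continuity on $[0,t_{k+1}]$. A further shrinking of $\mathcal V_k$ and $\mathcal W_k$ then produces $\mathcal V_{k+1},\mathcal W_{k+1}$ on which the strict bound above holds at rank $k+1$; taking $\mathcal V:=\mathcal V_N$ and $\mathcal W:=\mathcal W_N$ concludes.

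The hard part, I expect, will be maintaining the uniform local-existence time $T_*$ across all $N$ iterations. If the norms of the initial data or of the noise were allowed to inflate between successive steps, the formula \eqref{T_star} would force $T_*$ to shrink and a finite partition might fail to cover $[0,T]$. The device above---fixing $R$ strictly above the global norm $M$ from the outset and enforcing a quantitative margin (the strict bound by $1$) at every step---is precisely what allows one to absorb small perturbations of $(h_0,z)$ without sacrificing the size of $T_*$, at the price of progressively shrinking the neighbourhoods.
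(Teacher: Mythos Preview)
Your proposal is correct and follows essentially the same strategy as the paper: fix a radius $R$ controlled by the global norm of the reference solution, choose the single step $T_*=T_*(R)$ from \eqref{T_star}, partition $[0,T]$ uniformly, and run an induction in which the local continuity result is applied on each subinterval while a quantitative margin keeps all restarted data inside the ball $\mathbb B^R$. The only cosmetic differences are that the paper phrases the induction in terms of $v=h-z$ and uses the OU-type shifted noise $\xi(t)=\zeta(t+kT_*)-S(t)\zeta(kT_*)$, whereas you work directly with $h$ and the plainly shifted $\tilde\zeta=\zeta(t_k+\cdot)$; both parameterizations yield the same fixed point of $\Gamma$ on each subinterval, so the arguments are equivalent.
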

\subsection{Equivalence of formulations and conclusion}
To show equivalence between the formulations \eqref{eq:mild_h} and \eqref{eq:weak_u}, it is convenient to revert to an interpretation of a solution with a more ``variational appeal'' (see e.g.\ the definition of a solution in \cite{prevot2007concise}).
For $\beta >4/3,$ 
if $(h,\tau )$ is the mild solution built in Claim \ref{clm:LocalSolvability-h},
using \cite[Proposition 6.4]{DPZ}, it is true that $(h,\tau )$ is also a
\emph{weak solution}, namely: 
for every $\zeta $ in $D(A),$ we have
\begin{equation}\label{weak_solution3}
\langle h(t)-h_0,\zeta \rangle=
\int_0^t\langle h,A\zeta \rangle\d s+
\int_0^t\langle b_{h},\zeta \rangle\d s
+\int_0^t\langle \d w_\phi ,\zeta\rangle\,,
\qquad \text{a.s.\ for} \enskip t<\tau\,.
\end{equation}
Furthermore,
we can extend \eqref{weak_solution3} to test functions that belong to the larger space $V\equiv D((-A)^{1/2}).$ Indeed, 
from the inequality $|\sin2h|\leq 2|h|,$ since
\[
\int_I((\partial _r\zeta )^2+\frac{\zeta ^2}{r^2})r\d r<\infty,\quad \forall \zeta \in V\,,
\]
then one can integrate by parts (see \eqref{ipp}) in the first term of \eqref{weak_solution3}, the resulting expression depending continuously of $\zeta $ in $V.$ Using a density argument, we have for every $\zeta$ in $V:$
\begin{equation}\label{weak_solution}
\langle h(t)-h_0,\zeta \rangle=-\int_0^t\langle \partial _{r}h,\partial _r\zeta \rangle\d s -\int_0^t\left\langle \frac{\sin2h}{r} ,\frac{\zeta }{r}\right\rangle\d s
+\int_0^t\langle \d w_\phi ,\zeta  \rangle\,,\qquad \text{a.s.\ for} \enskip t<\tau\,,
\end{equation}
which extends \eqref{weak_solution3}.

Now, for a solution of \eqref{weak_solution}, the following generalized It\^o Formula holds true:
let $\zeta  \in C^1((0,1);\R),$ compactly supported, and $\varphi\in C^2(\R^3)$ with bounded derivatives, then we have:
\begin{multline}\label{ito_formula}
\big\langle\varphi (h(t))-\varphi (h_0),\zeta  \big\rangle=
-\int_0^t \big\langle \nabla _rh(s),\nabla _r\left(\varphi' (h(s))\zeta \right)\big\rangle \d s
+\int_0^t \big\langle  b_{h}(s),\varphi' (h(s))\zeta \big\rangle \d s
\\
+\sum_{k\geq 1}\int_0^t \big\langle\varphi '(h(s))\phi e_k, \zeta  \big\rangle\d B_k 
+\frac12\sum_{k\geq 1}\int_{0}^t \big\langle\varphi ''(h(s))(\phi e_k)^2,\zeta  \big\rangle\d s\,,
\end{multline}
a.s.\ for $t\in[0,T].$
Note that every term above makes sense, since on the one hand:
\[
|\varphi '(h)\zeta |^2_{V}
\equiv \int_I\left[\big(\varphi ''(h)\partial _rh\zeta  +\varphi'(h)\partial _r\zeta  \big)^2+\left(\frac{\varphi '(h)\zeta }{r}\right)^2\right]r\d r <\infty\,,
\]
and, as already noticed in \eqref{P2}, we have on the other hand 
$|b_{h}|_{H}\leq c|h|_{V_\beta }^3<\infty.$
\begin{proof}[Proof of \eqref{ito_formula}]
The proof follows the lines of \cite[Prop.~A.1.]{debussche2016degenerate}, the difference being that: (i) functional spaces here consist of radial functions;
(ii) the mollification argument needs to fit with the data on the boundary (hence we need to suitably extend $h$ on a bigger space interval).

Define the ``1-fattening'' of a given $f\in V$ as the map $\tilde f:[-1,2]\to \R$ such that
\begin{equation}\label{1-fattening}
\tilde f(r):=
\begin{cases}
f(r)\quad\text{if}\quad r\in[0,1]
\\
-f(-r)\quad\text{if}\quad r\in[-1,0)
\\
-f(2-r)\quad\text{if}\quad r\in(1,2]\,.
\end{cases}
\end{equation}
For $\zeta \in C^\infty_0(-1,1)$ \eqref{weak_solution},
consider a decomposition 
\begin{equation}\label{decomposition_zeta}
\zeta = \zeta_-+\vartheta +\zeta _+,\quad \text{where:}
\quad\spt\zeta_-\subset (-1,0),
\quad\vartheta \enskip \text{is even}\enskip
\quad\text{and}\enskip \spt\zeta _+\subset(0,1)\,,
\end{equation}
and denote by $\zeta_0:=\zeta_-+\zeta _+.$
Owing to \eqref{weak_solution} and \eqref{1-fattening}, we have
\begin{multline*}
\int_{-1}^0\left(\tilde h(t,r)-\tilde h_0(r)\right)\zeta _-(r)r\d r=\int_0^1\left(h(t,r)-h_0(r)\right)\zeta _-(-r)r\d r
\\
\equiv
-\int_0^t\big\langle \partial _{r}h,\partial _r\left(\check\zeta_-\right) \big\rangle\d s -\int_0^t\left\langle \frac{\sin2h}{r} ,\frac{\check\zeta_- }{r}\right\rangle\d s
+\int_0^t\left\langle \d w_\phi ,\check\zeta_- \right\rangle\,,
\end{multline*}
where $\check\zeta _-:=\zeta _-(-\cdot ),$
but on the other hand, we have for instance
\begin{multline*}
-\int_0^t\left\langle \partial _rh,\partial _r(\check\zeta _-)\right\rangle \d s
=+\iint_{[0,t]\times[0,1]}\partial _rh(r)\partial _r\zeta _-(-r)r\d r\d s
\\
=-\iint_{[0,t]\times[-1,0]}\partial _r\tilde h(r)\partial _r\zeta _-(r)r\d r\d s\,.
\end{multline*}
The treatment of the other terms is similar, hence
summing over $\zeta _+,\zeta _-,$ we obtain
\begin{multline}\label{weak_solution2}
\int_{[-1,2]}(\tilde h(t)-\tilde h_0)\zeta_0 r\d r 
=-\iint_{[0,t]\times [-1,2]}\partial _{r}\tilde h\partial _r\zeta_0 r\d r\d s 
\\
-\iint_{[0,t]\times [-1,2]}\frac{\sin2\tilde h}{r} \frac{\zeta_0}{r}r\d r\d s
+\sum_{k\geq 1}\iint_{[0,t]\times [-1,2]}\widetilde{(\phi e_k)}\zeta_0 r\d r\d B_k \,,
\end{multline}
a.s.\ for $t<\tau.$

Furthermore,
using the fact that $\vartheta $ is even, it holds true that
\begin{align}
\label{id_theta}
\int_{[-1,0]}(\tilde h(t)-\tilde h_0)\vartheta r\d r =-\int_{[0,1]}(\tilde h(t)-\tilde h_0)\vartheta r\d r 
\\
-\iint_{[0,t]\times [-1,0]}\partial _{r}\tilde h\partial _r\vartheta r\d r\d s 
=\iint_{[0,t]\times [0,1]}\partial _{r}\tilde h\partial _r\vartheta r\d r\d s 
\\
\sum_{k\geq 1}\iint_{[0,t]\times [-1,0]}\widetilde{(\phi e_k)}\vartheta r\d r\d B_k
=-\sum_{k\geq 1}\iint_{[0,t]\times [0,1]}\widetilde{(\phi e_k)}\vartheta r\d r\d B_k \,,
\intertext{
and concerning the singular term,
we have for every $0<\epsilon <1$:}
\label{id_theta2}
-\iint_{[0,t]\times [-1,-\epsilon ]}\frac{\sin2\tilde h}{2r} \frac{\vartheta}{r}r\d r\d s
=\iint_{[0,t]\times [\epsilon ,1]}\frac{\sin2\tilde h}{2r} \frac{\vartheta}{r}r\d r\d s\,,
\end{align}
which suggest that one can simply \emph{define}, for every $\zeta \in C^\infty_0(-1,1)$ the integral
\begin{equation}
\label{cauchy_principal}
-\iint_{[0,t]\times [-1,1]}\frac{\sin2\tilde h}{2r} \frac{\zeta }{r}r\d r\d s
:=\lim_{\epsilon \to0}-\iint_{[0,t]\times [-1,-\epsilon ]\cup[\epsilon ,1]}\frac{\sin2\tilde h}{2r} \frac{\zeta }{r}r\d r\d s
\end{equation}
(note the ressemblance with the notion of ``Cauchy principal value'').
Similar relations hold if $\vartheta $ is supported around $r=1,$ changing the intervals from $[-1,0],[0,1]$ to $[0,1],[1,2].$
Hence, \eqref{weak_solution2} can be extended to test functions that are not necessarily equal to zero at the origin.

Now, in \eqref{weak_solution2}, for every $r\in \I$, test against $\rho _\epsilon (r- \cdot ),$ where $(\rho _\epsilon )$ is an approximation of the identity such that for every $\epsilon >0:$
\[
\rho _\epsilon \enskip \text{is even}\quad\text{and}
\quad \spt\rho _\epsilon \subset (-1,1)\,.
\]
For $f$ in $V,$ denote by $f^\epsilon (r)$ the convolution $\int_{[-1,2]} \tilde f(r')\rho _\epsilon (r-r')r'\d r',$ for $f$ in $H,$ and observe,
using self-adjointness, that
\[
A(f^\epsilon (r))\equiv\int_{[-1,2]} \tilde f(r')A\rho _\epsilon (r-r')r'\d r'
=\int_{[-1,2]} A\tilde f(r')\rho _\epsilon (r-r')r'\d r'
=(Af)^\epsilon(t,r) \,.
\]
We hence obtain that
\begin{equation}
h^\epsilon (t,r)=h_0^\epsilon (r)+\int_0^tA(h^\epsilon (s,r))\d s
+\int_0^t \Big(\frac{2h -\sin 2h}{2r^2}\Big)^\epsilon(s,r)\d s
+\sum_{k\in \N}\int_0^t\phi _k^\epsilon (r)\d B_k(s)
\end{equation}
where we define $\phi _k:=\phi e_k.$
Applying the 1-dimensional It\^o Formula, multiplying by $r\zeta (r)$, $\zeta \in C^1_0(\I)$ and integrating over $r\in \I,$ we end up with
\begin{multline}
\label{1D-Ito}
\left\langle \varphi\big(h^\epsilon (t)\big)-\varphi \big(h_0^\epsilon \big),\zeta \right\rangle
\\
=-\int_0^t\big\langle \partial _r h^\epsilon ,\partial _r\zeta  \big\rangle\d s
-\int_0^t\Big\langle \frac{h^\epsilon}{r} ,\frac{\zeta}{r}\Big\rangle\d s
+\int_0^t\Big\langle\Big(\frac{2h -\sin 2h}{2r^2}\Big)^\epsilon,\zeta  \Big\rangle\d s
\\
+\sum_{k\in\N}\int_0^t\langle\phi _k^\epsilon ,\zeta \rangle \d B_k(s)
+\frac12\sum_{k\in\N}\int_0^t\langle\varphi ''(h^\epsilon (s))(\phi _k^\epsilon )^2,\zeta \rangle\d s\,.
\end{multline}

At this step, the conclusion involves essentially the same arguments as that of \cite{debussche2016degenerate}, hence we only sketch the proof here (we refer to the latter reference for details).
For $f\in H\equiv L^2(\I;r\d r),$ there hold the basic properties (see e.g.\ \cite{evans2010partial}):

\begin{multline}
\label{convergence_convolution}
|f^\epsilon |_{H}\leq |f|_{H}\,,\quad |f^\epsilon -f|_{H}\to0\,,
\\
\text{and if}\enskip f\in D(A)\enskip \text{then}\enskip  
|A(f^\epsilon) |_{H}\leq |Af|_{H}\,,\quad |A(f^\epsilon -f)|_{H}\to0\,.
\end{multline}
Interpolating, we also have
\[
|f^\epsilon |_{V_\beta }\leq |f|_{V_\beta }\,,\quad |f^\epsilon -f|_{V_\beta }\to0\,,
\]
for every $\beta \in(4/3,2].$
Hence, each term in the drift of \eqref{1D-Ito} converges to the corresponding term of $h$ (note that for $\beta >4/3$ it has been already seen that $|b(r,h(r))|_{H}<\infty$, and recall that $|\partial _rf|_{H}+|\frac{f}{r}|_{H}\simeq|f|_{V_{1/2}}$), and similarly for the left hand side. 

Concerning the stochastic term, we have, using Burkholder-Davis-Gundy inequality:
\begin{multline*}
\E\sup_{t\leq T}\Big|\sum_{k\in \N}\int_0^t\left\langle \varphi '\big(h^\epsilon (s)\big)\phi _k^\epsilon -\varphi' \big(h(s)\big)\phi _k,\zeta \right\rangle\d B_k(s)\Big|
\\
\leq c\E\left(\int_0^T\sum_{k\in\N}\big|\left\langle \varphi '\big(h^\epsilon (s)\big)\phi ^\epsilon _k-\varphi '\big(h(s)\big)\phi _k,\zeta \right\rangle\big|^2\d s\right)^{1/2}
\\
\leq c'\E\left(\int_0^T|\varphi '(h^\epsilon )-\varphi '(h)|^2_{H}|\phi ^\epsilon |^2_{\mathbb{L}_2(H,V)}\d s\right)^{1/2}
\\
+c'\E\left(\int_0^T|\varphi '(h)|^2_{H}|\phi ^\epsilon-\phi  |^2_{\mathbb{L}_2(H,V)}\d s\right)^{1/2}\,,
\end{multline*}
where $\phi ^\epsilon:=\sum_{k\geq 1}\phi _k^\epsilon \langle e_k,\cdot  \rangle. $
Since $\phi \in \mathbb L_2(L^2,V_{\beta'})\subset \mathbb L_2(L^2,V)$ for $\beta '>\beta >1$, dominated convergence and boundedness of $\varphi ''$ imply the desired convergence.
\end{proof}
\begin{proof}[Proof of Theorem \ref{thm:loc_solv}]
\textit{Existence.}
Consider $h=h(t,r)$ as in Claim \ref{clm:LocalSolvability-h}, and fix $\theta \in[0,\pi).$
According to the above discussion, since the map 
\[
\R\ni h\mapsto \varphi(h):=u_h\equiv \left(\cos\theta \sin h,\sin\theta \sin h,\cos h\right)\in\R^3
\]
has bounded derivatives up to second order, then
for $\psi\equiv(\psi ^1,\psi ^2,\psi ^3)$ in $C^\infty_0(\D;\R^3),$ we can apply \eqref{ito_formula} to 
\[I\ni r\mapsto\zeta^i _\theta (r):=\psi^i (r\cos\theta ,r\sin\theta )\,,
\quad\text{for} \enskip 1\leq i\leq 3\,,
\]
provided $\psi $ vanishes at the origin. However, reasoning as above, we can give meaning to the formula for a general $\psi \in C^\infty_0,$ if one replaces $h$ by its $1-$fattening $\tilde h.$ More precisely, decomposing each map $\varphi '(h)^i\zeta ^i_\theta $ as in \eqref{decomposition_zeta} (leaving aside smoothness which is not crucial here), it is licit, as in \eqref{cauchy_principal}, to define
\[
\iint_{[0,t]\times [-1,1]}\nabla_r \tilde h\cdot \nabla_r (\varphi^i (\tilde h)\zeta ^i_\theta  )r\d r\d s :=\lim_{\epsilon \to0}\iint_{[0,t]\times [-1,\epsilon ]\cup[\epsilon ,1]}\nabla_r \tilde h\cdot \nabla_r (\varphi^i (\tilde h)\zeta_\theta^i  )r\d r\d s\,.
\]
Hence, observing that $\varphi '(h)^i=u_{h+\pi/2}^i\equiv\Phi _h^i,$ and summing over $i=1,2,3,$ one obtains:
\begin{multline*}
\int_{[-1,1]} (u_{\tilde h}(t)-u_{\tilde h_0})\cdot \psi  r\d r
\\
= -\iint_{[0,t]\times [-1,1]}\nabla_r \tilde h\cdot \nabla_r (\Phi _{\tilde h}\cdot \psi   )r\d r\d s
+\iint_{[0,t]\times [-1,1]}\frac{2\tilde h-\sin2\tilde h}{2r^2} u_{\tilde h}\cdot \psi   r\d r\d s
\\
-\frac12\sum_{k\geq 1}\iint_{[0,t]\times[-1,1]}u_{\tilde h}(\phi e_k)^2\psi r\d r\d s+\sum_{k\geq 1}\iint_{[0,t]\times[-1,1]}\Phi _{\tilde h}\cdot \psi(\widetilde{\phi e_k})  r\d r\d B_k\,,
\end{multline*}
(here the integrand $\psi $ is evaluated at $(r\cos\theta ,r\sin\theta )$).
Integrating over $\theta \in[0,\pi],$ changing the variables,
we end up with
\begin{multline}\label{eq_0}
\int_{\D}(u_h(t)-u_{h_0})\cdot \psi\d x
=\int_0^t\Big\langle Ah +b_{h}(s,|x|), \Phi _h \psi\Big\rangle_{W^{-1,2},W^{1,2}}\d s
\\
-\frac12\sum_{k\geq 1}\int_0^t\left\langle u_h(\phi e_k)^2,\psi \right\rangle_{L^2}\d s+\int_0^t\left\langle\Phi _{h} \d w_\phi,\psi \right\rangle_{L^2}\,,
\end{multline}
where $\langle \cdot ,\cdot \rangle_{W^{-1,2},W^{1,2}}$ denotes the dual pairing on the disk $\D$, this formula being justified by the fact that
\begin{multline*}
|\nabla (\Phi _h\cdot \psi )|_{L^2}^2 \equiv \iint\limits_{I\times[0,2\pi]}\Big[-\partial _rh(u_h\cdot \psi) 
\\
+\Phi _h\cdot \Big((\cos\theta-\sin\theta ) \partial _1\psi +(\sin\theta+\cos\theta )\partial _2\psi \Big) +\frac{\Theta \cdot \psi }{r}\Big]^2r\d r\d \theta <\infty
\end{multline*}
(again, $\psi $ is evaluated at $(r\cos\theta ,r\sin\theta )$ and we define $\Theta =(-\cos\theta ,\sin\theta ,0)$).
Furthermore, direct computations show that in the sense of distributions in $\D,$ we have the identity
\begin{equation}\label{id:Delta}
\Delta u_h+u_h|\nabla u_h|^2 =\left(\partial _{rr}h+\frac{\partial _rh}{|x|}-\frac{\sin2h}{|x|^2}\right)\Phi _h\,,
\end{equation}
Hence, we obtain that $u$ verifies \ref{def_ii}-\ref{def_iii} in Definition \ref{def:weak_sol}. The property \ref{def_i} is trivial.

Considering, for $t\in[0,\tau ^\beta ),$ the Fourier-Bessel series
\[
h^N(t,\cdot ):=\sum_{k=1}^N\langle h(t),e_k \rangle e_k(\cdot )\,,
\]
by the definition \eqref{nota:ek}, we observe that $h^N(t,1)=0,$ for every $N\geq 1,$ independently of the time-variable.
By the Sobolev embedding $V_{\alpha }\subset L^\infty,$ when $\alpha >1$ (see Remark \ref{rem:Sobolev_embedding}) the property remains also true for $h\equiv\lim_{N\to\infty,V_\beta } h^N.$  This shows the property \ref{def_iv}.

\item[\indent\textit{Uniqueness.}]
Conversely,
let $u(t,x)\equiv u_{h(t,|x|)}$ be a $1-$corotational map such that
\eqref{eq:weak_u} is fulfilled for every smooth $\psi$ with compact support in $\D.$ Consider $\psi(x):=\zeta (|x|) $ where $\zeta $ has compact support in $(0,1),$
and write  $\check w_\phi(t,x),$ resp.\ $\check h(t,x)$ instead of $w_\phi (t,|x|),$ resp.\ $h(t,|x|).$
Using e.g.\ the It\^o Formula
\footnote{in fact there holds $\d \,(u\cdot \Phi _{\check h})=\d u\cdot \circ\Phi _h +u\cdot \circ \d \,(\Phi _h)$ in $W^{-1,2};$ to see this apply Theorem 4.2.5.\ to $|u|_{L^2}^2,$ and $|\Phi _h|_{L^2}^2,$ and then polarize.
}
given in \cite{prevot2007concise}, since $u\perp\Phi _h,$
there holds:
\[
0\equiv \d \Big(\left\langle u,\Phi _{\check h} \psi \right\rangle_{L^2}\Big)
=\big\langle \d u,\circ\,\Phi _{\check h} \psi \big\rangle_{W^{-1,2},W^{1,2}_0}
+\left\langle u,\circ\d \big(\Phi _{\check h}\big) \psi \right\rangle_{W^{1,2}_0,W^{-1,2}}
\]
(differential sense).
Hence, by \cite[Prop.~A.1.]{debussche2016degenerate},
one has
\[
\begin{aligned}
0&=\Big\langle(\Delta u +u|\nabla u|^2)\d t +\Phi _{\check h}\circ\d \check w_\phi,\Phi _{\check h}\psi \Big\rangle_{W^{-1,2},W^{2,1}_0}
-\left\langle \d \check h,\circ(u\cdot u)\psi  \right\rangle_{W^{-1,2},W^{1,2}_0}
\\
&=\Big\langle\partial _{rr}\check h+\frac{\partial _r\check h}{|x|}-\frac{\sin2\check h}{|x|^2},\psi\Big\rangle_{W^{-1,2},W_0^{1,2}} \d t
+\left\langle\d \check w_{\phi },\psi \right\rangle_{L^2}
-\langle\d \check h,\psi \rangle_{W^{-1,2},W_0^{1,2}} \end{aligned}
\]
where we have used \eqref{id:Delta}, $u\cdot u\equiv|u|^2=1,$
and also the fact that, according to the It\^o Formula in \cite{prevot2007concise}, there holds for any $\varphi $ in $W^{1,2}_0(\D):$
\[
\d \left(\left\langle\Phi _{\check h},\varphi\right\rangle\right)
=\left\langle \frac{\d \Phi_x}{\d x}\Big|_{x=h}\circ\left(\d \check h\right),\varphi \right\rangle_{W^{-1,2},W^{1,2}_0}
\equiv\left\langle-u\circ\d \check h,\varphi \right\rangle_{W^{-1,2},W^{1,2}_0}\,.
\]
Changing the variables in the latter integrals, it follows that $h$ is a weak solution of \eqref{eq:h_dW_functional}, in the sense that it fulfills
\eqref{weak_solution}.
Hence, by the fact that weak solutions are also mild solutions (see \cite[Prop. 6.3]{DPZ}),
uniqueness follows from Claim \ref{clm:LocalSolvability-h} and Remark \ref{rem:equiv_A_Delta}.
\end{proof}

% 
% \begin{remark}\label{rem:uh_solution}
% \end{remark}
% 
% 
% 
\section{Proof of Theorem \ref{thm:main}}\label{sec:proof_main_thm}
\subsection{Preliminary material and key proposition}
Fix $4>\beta>2.$
According to Remark \ref{rem:equiv_A_Delta}, we can focus on the proof of blow-up for $h,$ i.e.\ the colatitude of $u$ given by Claim \ref{clm:LocalSolvability-h}.
In the sequel, when $(h_0,z)\in V_\beta \times C([0,\infty);V_\beta )$, we will systematically denote by
$(h(h_0,z),\tau^\beta (h_0,z))$
the mild solution of \eqref{eq:h_dW_functional} on $\{Z=z\}$, and its maximal time of existence in $V_\beta$,
namely:
\begin{equation}\label{nota:h}
\left[\begin{aligned}
&h(h_0,z):=v+z\,,\enskip\\
&\text{where}\enskip v=v(h_0,z)\enskip \text{solves in the mild sense:}\\
&\hspace{1em}\begin{cases}
\hspace{0.2em}\partial _tv=Av+b_{v+z}\enskip \text{on}\enskip \big[0,\tau^\beta (h_0,z)\big)\times \I\,,\\
\enskip v|_{t=0}=h_0\,,
\end{cases}\\
&\text{and where}\enskip \tau^\beta (h_0,z) <\infty\enskip \text{implies}\enskip  \limsup_{t\to\tau^\beta (h_0,z) }|h(h_0,z,t)|_{V_\beta }=\infty\,,
\end{aligned}\right.
\end{equation}
(see Section \ref{sec:proof:solvability}).
Our approach is to show first that  a fixed $z\in C([0,2t_*];V_{\beta})$ with $z(0)=0$, there exists a ``pre-blow-up set'' $\mathfrak H_z,$ namely a set of initial conditions $h_0$ such that the associated solution $h(h_0,z,\cdot )$ blows up before $t_*$.
\begin{proposition}
\label{clm:blowup_z_fixed}
Let $4>\beta>2$, and fix $t_*>0$. There exists
$\bar\eta>0$, such that for all $z\in C([0,2t_*];V_{\beta})$ with $\left\|z\right\|_{2t_*,\beta}\leq \bar\eta$, there exists a parabola $\chi_*=\chi_*(z)$ belonging to the family \eqref{nota:chi_c}, and satisfying the property that: if 
$h_0\in V_\beta$ with $h_0\geq \chi_*$, then
\[
\tau^\beta(h_0,z)\leq t_*\,.
\]
Moreover, the pre-blow-up set $\mathfrak{H}=\{h_0\in V_\beta\,,\enskip h_0\geq \chi_*\}$, has \emph{nonempty interior} in $V_\beta$.
\end{proposition}
The following will be obtained in Subsection \ref{subsec:globalization}
through a topological argument.
\begin{corollary}
\label{mainProp}
Let $4>\beta>2$.
For any $t_*>0$, there exist two subsets $\mathfrak{H}$ of $V_\beta$, $\mathfrak{Z}$ of $C([0,t_*];V_\beta)$, with \emph{nonempty interiors},
such that for all $(h_0,z)\in\mathfrak H\times\mathfrak Z\,$:
\[
\tau^\beta(h_0,z)\leq t_*\,.
\]
\end{corollary}

A few preliminaries and notations are now needed to prepare the proof of Proposition \ref{mainProp}. As in Chang-Ding-Ye's proof, we will make use of a comparison principle for the scalar parabolic equation \eqref{eq:h_classical}.
It is however different from that of \cite{chang1992finite,bertsch2002nonuniqueness}, because the nonlinearity depends on the realization of the Ornstein Uhlenbeck process $Z:\Omega\to C([0,T];V_\beta )$.
However additiveness of the noise in \eqref{eq:h_dW_classical} allows to appeal to deterministic theory only, fixing $\omega \in\Omega$ and letting $z:= Z(\omega )$.

We consider equations of the form
\begin{equation}\label{eq:gene}
\partial _tf=Af - \frac{p\big(z(t,r)+f(t,r)\big)}{r^2}\, ,\quad\text{for}\enskip (t,r)\in[0,\kappa ]\times \I\setminus\{0\}\,,
\end{equation}
where $p:\R\to\R$ vanishes at the origin and
\begin{equation}\label{ass_z}
z\in C([0,\kappa ];V_\beta)\,\enskip \text{for some}\enskip \beta >1\enskip \text{and}\enskip z|_{\{0\}\times \I}=0
\end{equation}
(for such $z,$ using the definition of the scale $(V_\beta ),$ we have in fact $z|_{\Sigma}=0$).
In order to take into account the main cases we have in mind,
we will assume that the nonlinearity fulfills the following properties.
\begin{assumptions}
We will assume that $p:\R\to \R$ is of class $C^2$ around the origin, and that
\begin{equation}\label{ass}
p(0)=0\,;\quad p'(0)>-1\,;
\quad |p(x)-p(y)|\leq K|x-y|\enskip,\enskip \forall x,y\in\R\,,
\end{equation}
for some universal constant $K>0$.
\end{assumptions}
\begin{remark}\label{rem:comparison}
Assumptions \eqref{ass} do cover the cases where: (a) $p(x)=0$ (comparison principle for the linear equation $(\partial _t-A)f=0$), and
(b) $p(x)=\sin (2x)/2-x$ (comparison principle for \eqref{eq:h_dW_functional}).
\end{remark}
The proof of the following result is postponed to \ref{app3}. Here $\J$ denotes any compact subinterval of $\I.$
\begin{comparison}[\ref{eq:gene}]
Fixing some $\beta >1$, assume that the assumptions \eqref{ass_z} and \eqref{ass} are fulfilled, and that we are given $f,g\in C([0,\kappa);V_\beta )\cap C^1([0,\kappa );H)$, such that
\begin{enumerate}[label=(\roman*)]
\item\label{ineq:df_pro:comparison_principle}
 $-\int_0^\kappa\int_{\J}f\partial_t\zeta r\d r\d t \leq -\int_0^\kappa \int_{\J}( \partial _rf\partial _r\zeta +\frac{f+p (z+f)}{r^2}\zeta )r\d r\d t \,$,
\item\label{ineq:dg_pro:comparison_principle}
$-\int_0^\kappa\int_{\J}g\partial_t\zeta r\d r\d t \geq -\int_0^\kappa \int_{\J}( \partial _rg\partial _r\zeta +\frac{g+p (z+g)}{r^2}\zeta )r\d r\d t\,$,
\end{enumerate}
for all \text{nonnegative} $\zeta\in C^\infty([0,\kappa]\times \J)$ with $\zeta(t,r)=0$ on the parabolic boundary $\Sigma:=\{0\}\times \J\cup[0,\kappa)\times\partial \J$.
Assume moreover that $f\leq g$ on $\Sigma$.
Then:
\[
\text{For almost every}\enskip (t,r)\in[0,\kappa)\times \J\,,\enskip \text{we have}\enskip   f(t,r)\leq g(t,r)\,.
\]
\end{comparison}

In the sequel, for $k>0$, and $r\in{\I}$, we denote by
\begin{equation}\label{nota:chi_c}
\chi_k(r):=\, k\left(2r-3r^3+r^5\right)=\,kr(1-r^2)(2-r^2)\,.
\end{equation}
see Fig.\ 
\ref{fig:plot-chi1chi2}.
\begin{figure}
\begin{center}
 \includegraphics[width=0.65\linewidth]{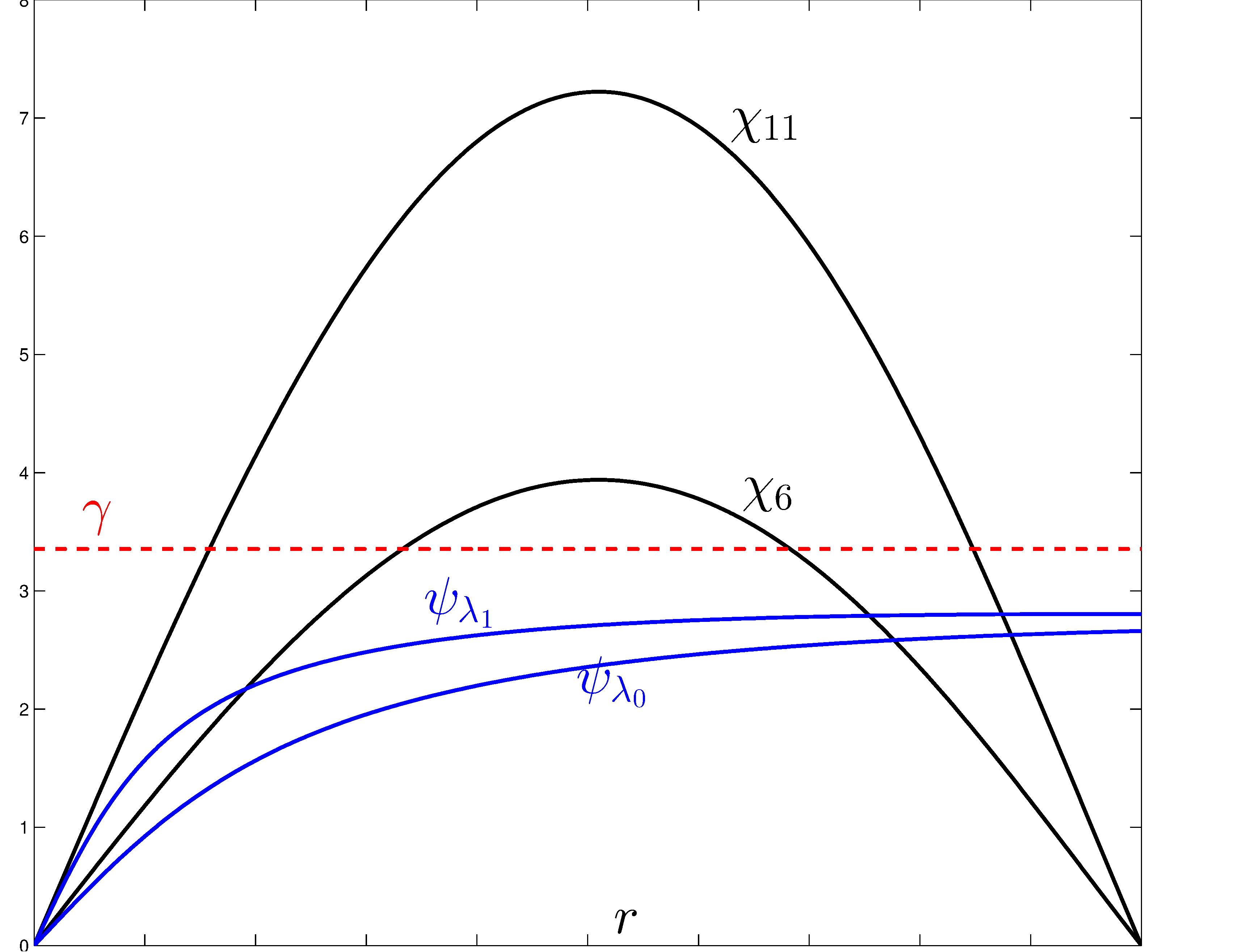}
 \caption{Plots of $\psi_{\lambda _0},\psi_{\lambda _1}$ for some $\lambda _1<\lambda _0$, together with $\chi _6,\chi _{11}$ and $\gamma $, for $r\in \I\equiv[0,1]$.}
 \label{fig:plot-chi1chi2}
\end{center}
\end{figure}
Initial data $h_0$ will be compared according to their position with respect to the reference family $(\chi_k )_{k\in\N}$, see the subsections below.
The choice of such parabolae rules out the pathological case where $\tau^\beta (\chi_k,z)$ equals $0$. Indeed,
on the one hand we have $A^2\chi _k(r)=48kr$, $r\in \I\setminus\{0\}$, which belongs to $V_{1/2-\epsilon}$ for any $\epsilon>0$ (this fact is left to the reader).
On the other hand:
$\chi_k(\partial \I)\equiv0$, and $A\chi_k(\partial \I)\equiv0$,
which ensures that for $k\in\R$:
\begin{equation}\label{chi_92}
\chi_k\in V_{\alpha }\,,\quad \text{for any}\enskip \alpha <\frac92\,.
\end{equation}

\subsection{First step in the proof of Theorem \ref{thm:main}: proof of Proposition  \ref{mainProp}}

In the following lemma, we exhibit an explicit family of maps $\{\psi_{\epsilon ,\mu ,\lambda _0,\xi} \}$ satisfying the differential inequality
\begin{equation}\label{diff_ineq}
\partial_t\psi\leq A\psi +b_{\psi+z}\enskip,
\end{equation}
up to some positive time.
Note that in the sequel, since $\beta$ is strictly bigger than $1$, we restrict our attention to maps that have a continuous version (see Remark \ref{rem:Sobolev_embedding}). Hence, when $f,g\in V_\beta $ we may write $f\leq g$ for $\langle f,\zeta \rangle\leq \langle g,\zeta  \rangle$ for all non-negative $\zeta \in C^\infty_0(\I).$

Here we denote by $\J:=[0,r_1]$ for some $r_1\in(0,1)$.
\begin{lemma}\label{lem:subsolution_f}
Fix $z$ in $C([0,\infty);V_\beta)$ with $z(0)=0$.
For $\lambda_0,\epsilon,\delta>0$, define
$\lambda: t\in[0,T_{\lambda_0})\mapsto \lambda(t)$ as the solution of the ODE :
\begin{equation}\label{nota:lambda}
 \lambda'=-\delta\lambda^\epsilon,\quad 0<t\leq T_{\lambda_0}:=\frac{\lambda_0^{1-\epsilon}}{(1-\epsilon)\delta}\enskip,\enskip \text{with initial data}\enskip 
 \lambda(0)=\lambda_0\,.
\end{equation}
Assume that there exist $t_+>0$, $\xi\in V_\beta$, $\xi \geq 0$, depending on $z$,
such that
\begin{equation}\label{nota_xt}
x(t,r):=S(t)\xi(r)+z(t,r)\geq 0\quad \text{for}\enskip  t\in[0,t_+]\enskip \text{and}\enskip r\in \J\,,
\end{equation}
where $S(t)=e^{tA}$, see \eqref{nota:A}.

Fix $0<\epsilon<1$. There exist positive constants $\bar\mu(\epsilon)$, $\bar\delta(\epsilon)$, such that for all
$\mu\geq\bar\mu(\epsilon )$ and $0<\delta\leq \bar\delta(\epsilon )$,
for all $\lambda_0>0$ defining $\lambda=\lambda_{\epsilon,\delta,\lambda_0}(t)$ as in \eqref{nota:lambda}, then the map
\begin{equation}\label{ansatz_psi}
 \psi(r,t)=\arccos\left(\frac{\lambda(t)^2-r^2}{\lambda(t)^2 + r^2}\right) +\arccos\left(\frac{\mu^2-r^{2+2\epsilon}}{\mu^2+r^{2+2\epsilon}}\right) +S(t)\xi(r)\enskip,
\end{equation}
fulfills the differential inequality \eqref{diff_ineq} on $[0,t_+\wedge T_{\lambda _0}]\times \J$.
\end{lemma}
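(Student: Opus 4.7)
My plan is to split $\psi = \psi_1 + \psi_2 + S(t)\xi$, where $\psi_1(t,r) = 2\arctan(r/\lambda(t))$ and $\psi_2(r) = 2\arctan(r^{1+\epsilon}/\mu)$ are the two $\arccos$-terms of \eqref{ansatz_psi} (using the identity $\arccos((a^2-b^2)/(a^2+b^2)) = 2\arctan(b/a)$ for $a,b\geq 0$), and to exploit two structural facts. First, at frozen $t$, $\psi_1$ is (up to rescaling) a stationary $1$-corotational harmonic map, hence $A\psi_1 + b(r, \psi_1) = 0$. Second, $S(t)\xi$ solves $(\partial_t - A)\,\cdot = 0$. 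Combining these yields the simplification
\[
\partial_t\psi - A\psi - b(r,\psi+z) = \partial_t\psi_1 + b(r,\psi_1) - A\psi_2 - b(r,\psi_1+\psi_2+x),
\]
where $x := S(t)\xi + z$, and where none of the terms involve $S(t)\xi$ or $z$ outside of $x$.

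Next I would use monotonicity: since $\partial_h b(r,h) = 2\sin^2 h/r^2 \geq 0$, the map $b(r,\cdot)$ is non-decreasing. By construction $\psi_2 \geq 0$, and by hypothesis \eqref{nota_xt} one has $x \geq 0$ on $[0,t_+]\times\J$, so $b(r,\psi_1+\psi_2+x) \geq b(r,\psi_1+\psi_2)$. This reduces the target inequality \eqref{diff_ineq} on $[0,t_+\wedge T_{\lambda_0}]\times\J$ to the purely deterministic, $(\mu,\lambda,\epsilon)$-dependent pointwise bound
\begin{equation}\label{det_ineq_sketch}
\partial_t\psi_1 + b(r,\psi_1) - b(r,\psi_1+\psi_2) - A\psi_2 \leq 0.
\end{equation}

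To handle \eqref{det_ineq_sketch}, I would compute $A\psi_2$ via the change of variable $\rho = r^{1+\epsilon}/\mu$ together with the stationary profile identity $Q''(\rho) + Q'(\rho)/\rho = \sin(2Q(\rho))/(2\rho^2)$ for $Q(\rho)=2\arctan\rho$, obtaining
\[
A\psi_2 = (1+\epsilon)^2\,\frac{\sin 2\psi_2}{2r^2} - \frac{\psi_2}{r^2},
\]
and then apply the trigonometric identity $\sin 2(\psi_1+\psi_2) - \sin 2\psi_1 = \cos 2\psi_1 \sin 2\psi_2 - 2 \sin 2\psi_1 \sin^2\psi_2$ to rewrite \eqref{det_ineq_sketch} as
\[
\partial_t\psi_1 + \frac{\sin 2\psi_2\,[\cos 2\psi_1 - (1+\epsilon)^2] - 2\sin 2\psi_1 \sin^2\psi_2}{2r^2} \leq 0,
\]
with $\partial_t\psi_1 = 2r\delta\lambda^\epsilon/(\lambda^2+r^2)$, $\sin\psi_1 = 2r\lambda/(\lambda^2+r^2)$, $\cos\psi_1 = (\lambda^2-r^2)/(\lambda^2+r^2)$, and analogues for $\psi_2$ with $(r^{1+\epsilon},\mu)$ replacing $(r,\lambda)$.

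I expect the pointwise inequality of the previous paragraph to be the main obstacle. The decisive cancellation is the strictly negative factor $1-(1+\epsilon)^2 = -(2\epsilon+\epsilon^2)$: the slightly supercritical power $r^{1+\epsilon}$ in $\psi_2$ is chosen precisely to produce it. The estimate must then be carried out by case analysis on $r/\lambda$: in the regime $r \lesssim \lambda$ the negative contribution $-(2\epsilon+\epsilon^2)\sin 2\psi_2/(2r^2) \sim -(2\epsilon+\epsilon^2)\,r^{\epsilon-1}/\mu$ must dominate the positive forcing $\partial_t\psi_1 \sim \delta\,r\,\lambda^{\epsilon-2}$, which holds provided $\delta\mu$ is taken small in terms of $\epsilon$; in the regime $r \gtrsim \lambda$ the sign-indefinite cross term $\sin 2\psi_1 \sin^2\psi_2/r^2$ must be absorbed, which is achieved by enforcing $\mu$ large so that $\sin^2\psi_2 \lesssim r^{2+2\epsilon}/\mu^2$ is uniformly small on $\J$. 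Tracking these competing scales across all $(r,\lambda)\in \J\times(0,\lambda_0]$ is what dictates the thresholds $\bar\mu(\epsilon)$ and $\bar\delta(\epsilon)$.
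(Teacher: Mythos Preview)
Your proposal is correct and follows essentially the same architecture as the paper: the same decomposition $\psi=\varphi_\lambda+\theta_{\epsilon,\mu}+S(t)\xi$, the same use of the harmonic-map identity $A\varphi_\lambda+b(\cdot,\varphi_\lambda)=0$, the same formula $A\theta=(1+\epsilon)^2\sin2\theta/(2r^2)-\theta/r^2$, and the same monotonicity of $h\mapsto b(r,h)$ to discard $x\ge0$ (the paper phrases this as $F_{\varphi,\theta}(x)=2x-\bigl(\sin2(\varphi+\theta+x)-\sin2(\varphi+\theta)\bigr)\ge0$, which is exactly $b(r,\varphi+\theta+x)\ge b(r,\varphi+\theta)$).

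The one genuine difference is in how the residual trigonometric inequality is closed. You expand $\sin2(\psi_1+\psi_2)-\sin2\psi_1=\cos2\psi_1\sin2\psi_2-2\sin2\psi_1\sin^2\psi_2$, which leaves a sign-indefinite cross term and forces a case split on $r/\lambda$. The paper instead uses the sum-to-product form $\sin2\varphi-\sin2(\varphi+\theta)=-2\sin\theta\cos(2\varphi+\theta)$, so that the whole bracket becomes $2(1+\epsilon)^2\cos\theta\sin\theta-2\sin\theta\cos(2\varphi+\theta)$. Then the single, clean choice of $\bar\mu(\epsilon)$ via $\cos\theta\ge 1/(1+\epsilon)$ (which is your ``$\mu$ large'' in disguise), together with the trivial bound $\cos(2\varphi+\theta)\le1$, yields $\ge 2\epsilon\sin\theta$ uniformly in $r$ and $\lambda$, with no regime analysis. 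Both routes land on the same final scalar inequality $\sup_{s>0}s^{2-\epsilon}/(1+s^2)\le \mu\epsilon/(\delta(\mu^2+1))$, which fixes $\bar\delta(\epsilon)$. Your approach works but is more laborious; note also that in your regime $r\gtrsim\lambda$ the forcing $\partial_t\psi_1$ is still present and must be beaten by what remains of the good term after absorbing the cross term---your sketch implicitly assumes this but does not say it.
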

\begin{proof}[Proof of Lemma \ref{lem:subsolution_f}]
Let $0<\epsilon<1$.
As in \cite{chang1992finite}, we set for $(\lambda,r)\in\R_+^*\times{\J}$:
\begin{equation}\label{nota:phi_theta}
\varphi_{\lambda}(r):=\arccos\left(\frac{\lambda^2-r^2}{\lambda^2 + r^2}\right),\quad \theta_{\epsilon,\mu}(r):=\arccos\left(\frac{\mu^2-r^{2+2\epsilon}}{\mu^2+r^{2+2\epsilon}}\right)\,.
\end{equation}
Recall that for any fixed triplet $\lambda,\epsilon,\mu>0$, the maps $\varphi_\lambda\,$, $\theta_{\epsilon,\mu}$ satisfy for  $r\in \J$ (see \cite{chang1992finite}):
\begin{equation}
\label{eq:theta}
\begin{aligned}
&A\varphi_\lambda(r)=\frac{\sin2\varphi_{\lambda}(r)-2\varphi_{\lambda}(r)}{2r^2}\\
&A\theta_{\epsilon,\mu}(r)=\frac{(1+\epsilon)^2\sin2\theta_{\epsilon,\mu}(r)-2\theta_{\epsilon,\mu}(r)}{2r^2}
\end{aligned}
\end{equation}
Now, since $\theta_{\epsilon,\mu}(r)\to0$ as $\mu\to\infty$, it is possible to choose a parameter $\bar\mu(\epsilon)$,
such that for all $r\in{\J}$,
\begin{equation}\label{nota:mu_bar}
\cos\theta_{\epsilon,\mu}(r)\geq\dfrac{1}{1+\epsilon}\,.
\end{equation}
Take $\mu\geq \bar\mu$, and let $z\in C([0,t_+];V_{\beta})$ be such that $x=S\xi+z$ takes nonnegative values on $[0,t_+]\times{\J}$.
For $t\in[0,T_{\lambda_0})$, $r\in{\J}$, define
$\psi(t,r):=\varphi_{\lambda(t)}(r)+\theta(r)+S(t)\xi(r)$,
and denote by $\theta:=\theta_{\epsilon,\mu}(\cdot )$, $\varphi:=\varphi_{\lambda(\cdot )}(\cdot )$, and $S\xi:=t\in\R_+\mapsto S(t)\xi$.
On the one side, using \eqref{eq:theta}, the trigonometric identities $\sin2\theta=2\cos\theta\sin\theta$ and $\sin2\varphi-\sin2(\varphi+\theta)=-2\sin\theta\cos(2\varphi+\theta)$,
there comes
\begin{equation}\label{calcul_1}
\begin{aligned}
A\psi+b_{\psi+z}&=A(\varphi+\theta+S\xi)+b_{\varphi+\theta+x}\\
&=\frac{1}{2r^2}\big[(1+\epsilon)^2\sin2\theta+\sin2\varphi-\sin2(\varphi+\theta)\\
& \hspace{6em}+ 2x +\sin2(\varphi+\theta)-\sin2(\varphi+\theta+x)\big]+AS\xi\\
&=\frac{1}{2r^2}\big[2(1+\epsilon)^2\sin\theta\cos\theta-2\sin\theta\cos(2\varphi+\theta)+F_{\varphi,\theta}(x)\big]+AS\xi
\end{aligned}
\end{equation}
where we denote by
$F_{\varphi,\theta}(x)=2x-\big(\sin2(\varphi+\theta+x)-\sin2(\varphi+\theta)\big)\,,\,x\in\R\,$, for $\varphi,\theta\in\R$.
Using \eqref{nota:mu_bar}, the right hand side in \eqref{calcul_1} is bounded below by
$1/(2r^2)[(2+2\epsilon-2\cos(2\varphi+\theta))\sin\theta  + F_{\varphi,\theta}(x)]+AS\xi$,
so that
$A\psi+b(r,\psi+z)\geq 1/(2r^2)[2\epsilon\sin\theta+F_{\varphi,\theta}(x)]+AS\xi$.
Expanding $\sin\theta $, we eventually obtain
\[\begin{aligned}
A\psi +b(r,\psi+z)(t,r)&\geq \frac{2\epsilon \mu r^{\epsilon-1}}{\mu^2+r^{2(1+\epsilon)}}+\frac{F_{\varphi,\theta}(x(t,r))}{r^2}+AS(t)\xi(r)\\
&\geq \frac{2\epsilon\mu r^{\epsilon-1}}{\mu^2+1}+\frac{F_{\varphi,\theta}(x(t,r))}{r^2}+AS(t)\xi(r)\,,
\end{aligned}\]
a.e.\ on $[0,t_+]\times \J\setminus\{0\}$ (and therefore in the sense of positive test functions).
Now, regardless of the values taken by the parameters $\theta,\varphi$, the map $F_{\varphi,\theta}$ vanishes at the origin, and has nonnegative derivative on $\R_+$. We deduce that since $x\geq 0$ on $[0,t_+]\times \J$, then so is $F_{\varphi,\theta}(x)$.
Moreover, simple computations show that for $r\in \J$:
\[
\partial_t\psi (t,r)=\frac{2\delta\lambda(t)^\epsilon r}{\lambda(t)^2+r^2}+AS(t)\xi(r)\enskip,
\]
Thus, if for every $(t,r)$ in $[0,t_+]\times \J\,,$ we have
$2\epsilon\mu r^{\epsilon-1}/(\mu^2+1)\geq 2\delta\lambda(t)^\epsilon r/(\lambda(t)^2+r^2)$,
then $\partial_t\psi\leq A\psi +b(r,\psi+z)$ holds. Setting $s=r/\lambda(t)$, it is however sufficient to verify that:
$\sup_{s\in\R}\frac{s^{2-\epsilon}}{1+s^2}\leq \frac{\mu\epsilon}{\delta(\mu^2+1)}$,
which is true if $\delta\geq \bar\delta$ for $\bar\mu>0$ as in \eqref{nota:mu_bar}.
This proves the lemma.
\end{proof}
\begin{remark}\label{rem:blow_up}
Let $\beta >2$, and consider $\xi,z,t_+$ as in Lemma \ref{lem:subsolution_f}, and assume that
\begin{equation}\label{hyp_t}
t_+\geq T_{\lambda _0}\equiv\frac{\lambda_0^{1-\epsilon}}{(1-\epsilon)\delta}\,.
\end{equation}
Then, the map $f:=\psi_{\epsilon,\mu,\lambda_0,\xi}$ constructed above blows up at $t=T_{\lambda_0}$.
Indeed, since $\xi\in V_\beta$ with $\beta>2$, then $|{\partial_rS(t)\xi}|_{L^{\infty}}$ is bounded for $t\in[0,T_{\lambda_0})$ (see Remark \ref{rem:Sobolev_embedding}), and:
\[\partial_r \psi(t,0)=\partial_r\varphi(t,0)+\theta'(0)+ \partial_rS(t)\xi(0)
=\frac{2}{\lambda(t)}+\partial_rS(t)\xi(0)\enskip \underset{t\to T_{\lambda_0}}{\longrightarrow}\infty\,.
\]
Let $(h,\tau):=(h(h_0,z),\tau^\beta(h_0,z))$ be the mild solution defined in \eqref{nota:h}, where $h_0\in V_\beta$. Assume $\tau\geq T_{\lambda_0}$, and that:
\begin{equation}\label{ineq_boundary}
f\leq g\quad\text{on}\enskip \{0\}\times \J \cup[0,T_{\lambda_0})\times\partial \J\,,
\end{equation}
where we let $g:=h-z$, and $\J:=[0,r_1]$ for some $r_1\in(0,1)$.

Up to a straightforward extension of $f$ on the whole space interval $I$, observe that $f\in C([0,T];V_\beta )\cap C^1([0,T];H)$ (this can be shown by direct computations). On the other hand, recall that $g\equiv h(z)-z$ satisfies $g(t)=S(t)h_0+\int_0^tS(t-s)b_{g+z}\d s$ for $t\leq\tau $.
Since $\beta >4/3$, then \eqref{P2} yields $b_{g+z}\in C([0,T];H)$, so that
\begin{equation}\label{regularity_g}
g\in C([0,T];V_\beta )\cap C^1([0,T];H)\,.
\end{equation}
By \eqref{ineq_boundary}, \eqref{regularity_g} and Lemma \ref{lem:subsolution_f}, the comparison principle for \eqref{perturbedEq} can be applied so that
$f\leq g$ on $[0,T_{\lambda_0})\times \J$.
Since the maps $f,g$ vanish at the origin regardless of the time variable, it follows that:
\[
\partial_rf(t,0)\leq\partial_rg(t,0)\,,\enskip \text{for all} \enskip t\in[0,T_{\lambda_0})\,,
\]
and then $|\partial_r h|_{L^{\infty}}\to\infty$ as $t\to T_{\lambda_0}$, which by Remark \ref{rem:Sobolev_embedding}
implies blow-up also in the sense that $\limsup_{t\to T_{\lambda _0}}|h(t)|_{\beta }=\infty$.
\end{remark}
We can now turn to the proof of Proposition \ref{clm:blowup_z_fixed}.
\begin{proof}[Proof of Proposition \ref{clm:blowup_z_fixed}]
Fix $2<\beta<4$.
For each $z\in C([0,2t_*];V_{\beta})$, and $\xi\in V_\beta$, we define $x=x_{\xi ,z}$ by:
\begin{equation}\label{def_x}
x(t)=S(t)\xi+z(t)\,,\enskip\text{for}\enskip t\leq 2t_*\,.
\end{equation}
In what follows we denote by $\J$ the compact interval $[0,1/2]$.

\item[\indent\textit{Step 1: nonnegativeness of $x$ up to a positive time.}]
Assume that $\xi\geq \chi_1$ on $\J$, where $\chi_1$ is the parabola defined by \eqref{nota:chi_c}. Note that such $\xi \in V_\beta $ exists for $\beta >2$ since is suffices to let for instance $\xi :=\chi _1$, see \eqref{chi_92}.
Our aim now is to show that if the perturbation $z$ is not too large in $C([0,2t_*];V_{\beta})$, then the map $x$ defined above stays nonnegative on $\J$.

We first claim that there exists a constant $\eta>0$, such that
for all $\xi ,y\in V_\beta$ with $\xi \geq \chi _1$,
\begin{equation}\label{nota:eta0}
|\xi-y|_{\beta}\leq 2\eta\enskip\Rightarrow \enskip {y}|_{\J}\geq0\,.
\end{equation}
Indeed, since $\beta>2$, then there exists $c_\beta>0$, such that for all $y\in V_\beta$, (see Remark \ref{rem:Sobolev_embedding}),
\[|\partial_r\xi-\partial_ry|_{L^{\infty}({\J})}\leq c_\beta|\xi-y|_{\beta}\,.\]
Choose $\eta=c/(2c_\beta)$, where $c$ is such that $\chi_1(r)-cr\geq 0$ for $r\in{\J}$ (note that $c$ and therefore $\eta$ do not depend on $\xi$), so that $|y-\xi|_{\beta}\leq \eta$ will imply
$|\partial_ry-\partial_r\xi|_{L^{\infty}}\leq c/2$. We conclude by the Mean Value Theorem,
observing first that both maps equal zero at the origin: if $\left|\xi-y\right|_{\beta}\leq \eta$,
then $\forall r\in{\J}$, $y(r)\geq \xi(r)-cr\geq \chi_1(r)-cr$ and thus $y(r)\geq 0$,
which proves \eqref{nota:eta0}.

Furthermore, for a fixed $\xi\in V_\beta$ with $\xi\geq \chi_1$, since $S$ is a strongly continuous semigroup,
there exists $t_+(\xi)>0$ such that
\[\text{for all}\enskip t\in[0,t_+(\xi)]\,,\quad|S(t)\xi-\xi|_{\beta}\leq \eta\,,\]
and thus for $0\leq t\leq t_+(\xi)$, $\|z\|_{2t_*,\beta }\leq\eta$, the map $x$ defined in \eqref{def_x} verifies
\begin{equation}\label{ineq:x0}
|x(t)-\xi|_{\beta}\leq |S(t)\xi-\xi|_{\beta}+|z(t)|_{\beta}\leq 2\eta \,.
\end{equation}
We have to get rid of the dependence of $t_+(\xi)$ with respect to $\xi$.
But if $\xi\in V_\beta$ with $\xi\geq \chi_1$ on $\I$, apply the linear comparison principle (see the previous subsection) on the whole interval $\I$ to $f := S(\cdot )\chi_1$, $g:=S(\cdot )\xi$, $\kappa :=2t_*$ (note that we have $f\leq g$ on $\{0\}\times \I\cup[0,2t_*]\times\partial \I$). We obtain that 
\[
t_+(\xi)\geq t_+(\chi_1)\,.
\]
Now define $t_+:=t_+(\chi_1)$. We have proven that 
there exists $\eta>0$ such that for all $t\in[0,t_+]$,
for all $\xi\in V_\beta$ with $\xi\geq \chi_1$ on ${\I}$, for all $z\in C([0,2t_*];V_{\beta})$
with $\left\|z\right\|_{2t_*,\beta}\leq \eta$, then
\begin{equation}\label{ineq_x}
x|_{[0,t_*]\times \J}\geq 0\,.
\end{equation}
\item[\indent\textit{Step 2. Construction of a pre-blow-up set for a fixed $z$.}]
Once and for all, fix $\eta $ as in Step 1, $z\in C([0,2t_*];V_\beta )$ with $\|z\|_{2t_*,\beta }\leq \eta $, and
$\xi \in V_\beta $ with $\xi \geq \chi _1$ on $\J$, so that \eqref{ineq_x} holds for $x=x_{\xi ,z}$.

It suffices to prove the proposition with $t_*\wedge t_+$ instead of $t_*$.
Therefore, without loss of generality we assume in the sequel that 
\[
t_+=t_*\,.
\]
In order to lighten the notations we also
denote by $\tau=\tau ^\beta(\cdot ,z)$, and $h=h(\cdot ,z)$.
Take any $0<\epsilon<1$, and fix $\mu\geq\bar\mu(\epsilon)$, $\delta\leq \bar\delta(\epsilon)$ and $\lambda=\lambda _{\epsilon ,\delta ,\lambda _0}(t)$ as in Lemma \ref{lem:subsolution_f}, where $\lambda _0>0$ is chosen such that
\[
T_{\lambda _0}=\frac{\lambda_0^{1-\epsilon}}{\delta(1-\epsilon)}\leq t_+\,,
\]
so that we know by Lemma \ref{lem:subsolution_f}, that the map $f_0:=\psi_{\epsilon ,\mu ,\lambda_0,\xi}$ defined as
in \eqref{ansatz_psi}, fulfills
\begin{equation}\label{diff_ineq_2}
\partial_tf_0\leq Af_0 +b(r,f_0+z)\enskip \text{on}\enskip [0,T_{\lambda _0})\times \J\,,
\end{equation}
with blow-up at $t=T_{\lambda _0}$.
Our strategy is to take $h_0\geq f_0|_{t=0}$, compare $g:=h(h_0,z,\cdot )-z$ with this ansatz, and then conclude by Remark \ref{rem:blow_up} that blow-up of $h$ happens before $t_+$.
For that purpose it remains however to choose $h_0$ in such a way that
\eqref{ineq_boundary} holds.
Note that if $h_0\in V_\beta $ is taken such that
\begin{equation}\label{condition_sup}
\left(h(h_0)-z\right)|_{[0,t_+]\times\{\frac12\}}> \sup\limits_{(r,\lambda)\in{\J}\times\R_+^*}\big(\varphi_{\lambda}(r)+\theta(r)+S(t)\xi(r)\big)\,,
\end{equation}
then we will have
\[(h(h_0)-z)|_{[0,t_+]\times \{\frac12\}}\geq \psi_{\epsilon ,\mu ,\lambda_0 ,\xi}\Big(t,\frac12\Big)\,,\]
with $\psi$ as in \eqref{ansatz_psi}, and this will hold regardless of $\epsilon ,\mu ,\lambda _0,$ and $0\leq t\leq t_+ $. In particular \eqref{condition_sup} will imply the bound needed on $[0,t_+]\times\partial \J$. Moreover, note that $\pi$ is an upper bound for the family of maps $(\varphi_{\lambda}(\cdot))_{\lambda>0}$ (see Figure \ref{fig:plot-chi1chi2}).
This motivates the following definition:
let
\begin{equation}\label{nota:gamma}
\gamma:=\pi+|\theta_{\epsilon,\mu}|_{L^{\infty}}+\sup_{t\geq0}|S(t)\xi|_{L^{\infty}}\enskip,
\end{equation}
and for $h_0\in V_\beta $, define
\begin{equation}\label{nota:t_Sigma_12}
t_{\Sigma}(h_0)=\inf\left\{0\leq t\leq \tau(h_0)\,,\enskip \left(h(h_0,t)-z(t)\right)|_{\{\frac12\}}\leq \gamma\right\}\,,
\end{equation}
with the understanding that $t_{\Sigma}(h_0)=\tau(h_0)$ if the set is empty.

\noindent Note that $\gamma$ is well-defined.
Indeed for any $u=\Sigma_k u_ke_k\in V_\beta$, by Remark \ref{rem:Sobolev_embedding}, since $\beta>1$,
the mapping $t\mapsto |S(t)u|_{L^{\infty}}=|\Sigma_ku_ke^{t\lambda _k}e_k |_{L^{\infty}},\enskip t\geq0$, is bounded (see \eqref{nota:ek}).

We claim now that there exists an integer $k=k(z)\geq 1$ such that for all $h_0\in V_\beta$, if $h_0\geq\chi_{k}$ on $I$, then
\begin{equation}\label{eq:t12_cr}
\tau(h_0)\leq t_+\,.
\end{equation}
Indeed, let $h_0\in V_\beta $ with $h_0|_{\J}\geq f_0|_{\{0\}\times \J}$
and assume that $\tau(h_0)>t_+$.
Note that necessarily
$\inf_{t\in[0,t_+]}\left(h(h_0)-z\right)|_{[0,t_+]\times\{\frac12\}}< \gamma$,
otherwise by comparison between $f _0$ and $g:=h(h_0)-z$, Remark \ref{rem:blow_up} would yield blow-up for $h(h_0)$ before $t_+$. So we have 
\begin{equation}\label{t_sigma_h0}
t_{\Sigma}(h_0)\leq t_+\,.
\end{equation}

Now, choose any $\lambda_1>0$ with $T_{\lambda_1}=\lambda_1^{1-\epsilon}/(\delta(1-\epsilon))\leq t_{\Sigma}(h_0)$,
and define $f_1:=\psi_{\epsilon ,\mu ,\lambda_1,\xi}$ by the formula \eqref{ansatz_psi} with $\lambda _1$ instead of $\lambda _0$.
Since $\arccos$ is Lipschitz out of $0$, we can always find $k\geq 1$ 
such that for $r\in \J$:
\begin{equation}\label{init_f1}
\chi_k(r)\geq f_1(0,r)\equiv\arccos\left(\frac{\lambda_1^2-r^2}{\lambda_1^2 + r^2}\right) +\arccos\left(\frac{\mu^2-r^{2+2\epsilon}}{\mu^2+r^{2+2\epsilon}}\right) +\xi(r)\,,
\end{equation}
where $\chi _k$ is as in \eqref{nota:chi_c}.
Consider any $h_1\in V_\beta$ with $h_1\geq \chi_k$ on ${\I}$.
One has the following alternative.
\begin{caseone}
In this situation, we have:
\begin{equation}\label{inequalities_either1}
 T_{\lambda_1}\leq t_{\Sigma}(h_1)\leq \tau(h_1)\,,\enskip \text{and}\enskip h(h_1)\geq f_1\enskip \text{on}\enskip \{0\}\times \J\cup[0,T_{\lambda _1}]\times \partial \J\,,
\end{equation}
and the comparison principle for \eqref{perturbedEq} can be applied with
$\kappa =T_{\lambda _1}$, $f:=f_1$ and $g:=h(h_1)-z$.
By Remark \ref{rem:blow_up} we obtain that $h$ blows-up before $T_{\lambda _1}$,
whence $ T_{\lambda_1}=\tau(h_1)\leq t_+$.
\end{caseone}
\begin{casetwo}
In this case,
apply the comparison principle for \eqref{perturbedEq} on the whole interval $\I$ with $\kappa:=\tau(h_0)\wedge \tau(h_1)$, $f:=h(h_0)-z$, and $g:=h(h_1)-z$,
so that in particular:
\begin{equation}\label{gamma_f}
\text{on}\enskip [0,\tau(h_0)\wedge \tau(h_1))\,,\enskip \text{there holds}\enskip f\Big(\cdot\, ,\,\frac12\Big)\leq g\Big(\cdot\, ,\,\frac12\Big)\,.
\end{equation}
Therefore, necessarily $\tau(h_1)=t_{\Sigma}(h_1)$, otherwise we would have
\[
g\Big(t_{\Sigma}(h_1),\frac12\Big)\equiv\gamma < f\Big(t_{\Sigma}(h_1),\frac12\Big)\,,
\]
contradicting \eqref{gamma_f}.
Moreover, one has $t_{\Sigma}(h_1)\leq t_+$, and thus $\tau(h_1)\leq t_+$.
\end{casetwo}
We see that in both cases \eqref{eq:t12_cr} is true, and the claim implies that
\[
\mathfrak{H}:=\{h_1\in V_\beta \,,\enskip h_1\geq \chi _{k(z)}\}
\]
defines a pre-blow-up set for the individual element $z$.
\item[\indent\textit{Step 3. Nonemptiness of $\interior{\mathfrak H}$}.]
It suffices to show the result when $k=1$, namely that the set
$\mathfrak{H}=\{h_1\in V_\beta \,,\enskip h_1\geq \chi _{1}\}$
has nonempty interior for the topology of $V_\beta $.
Set $h_0=\chi_{2}\in\mathfrak{H}$, so that $h_0\in\mathfrak{H}$.
By Remark \ref{rem:Sobolev_embedding}, since $\beta>2$, there exists a sufficiently small radius $R>0$ such that if $h_1\in V_\beta$ with $|h_1-h_0|_\beta$, then $|\partial_rh_1-\partial_rh_0|_{L^{\infty}}\leq 1/2$.
By the Mean Value Theorem, since $h_0$ and $h_1$ vanish for $r\in\{0,1\}$,
then for all $r\in[0,1/2]$:
$|h_1(r)-h_0(r)|\leq (1/2)r\leq r(1-r),$
and the same holds when $r\in[1/2,1]$.
Thus for a.e.\ $r\in{\I}$:
\[
|h_1(r)-h_0(r)|\leq r(1-r)\,.
\]
The reader may also check that
\[
\forall r\in{\I},\enskip \chi_1(r)=r(1-r^2)(2-r^2)\geq r(1-r)\,.
\]
Thus, for all $h_1$ belonging to an open ball centered at $h_0=\chi_2$, and for all $r\in{\I}$:
$h_1(r)\geq \chi_{2}(r)-cr(1-r)\geq\chi_{1}(r)$,
which means that $h_1\in\mathfrak{H}$.
This finishes the proof of Proposition \ref{clm:blowup_z_fixed}.
\end{proof}
\subsection{Second step in the proof of Theorem \ref{thm:main}: proof of Corollary \ref{mainProp}}
\label{subsec:globalization}
Fix $\bar\eta>0$ as in Proposition \ref{clm:blowup_z_fixed}.
So far, we have shown that given a trajectory $z$ in the ball $\mathbb{B}\subset C([0,2t_*];V_{\beta})$, centered at zero and of radius $\bar\eta$, there exist an integer $k(z)$, such that for all $h_0\in V_\beta$, $h_0\geq \chi_{k(z)}$, then $\tau^\beta(h_0,z)\leq t_*$.
To conclude we would need in some sense to ``reverse the quantifiers''.
It turns out that this can be achieved by a simple topological argument, the use of which seems to be new in the context of SPDEs (to the best of our knowledge).

Define
\[
\mathfrak{F}_k(t_*)=\{z\in \mathbb{B}\,,\enskip\forall h_0\in V_\beta\enskip \text{with}\enskip h_0\geq \chi_k\enskip \text{on}\enskip \I\,,\enskip  \tau^\beta(h_0,z)\leq t_*\}\,.
\]
We claim that $\mathfrak{F}_k(t_*)$ is a closed subset of $\mathbb{B}$.
Indeed, by definition:
if $z\in \mathbb{B}\setminus\mathfrak{F}_k(t_*)$, there exists $h_0\in V_\beta$ with $h_0\geq \chi_k$ on $\I$ and $\tau(h_0,z)>t_*$. Let $z^n\in\mathbb{B}\to z$ in $\mathbb{B}$, as $n\to\infty$. Let $\epsilon>0$ such that $h(h_0,z,\cdot )$ is defined on $[0,t_*+\epsilon]$. 
By Lemma \ref{lem:c_dep_result}, $h(h_0,z^n,\cdot )$ will be defined up to $t_*+\epsilon$,
provided $n$ is large enough. And thus $(\mathfrak{F}_k(t_*))^c$ is an open set of $\mathbb{B}$, which proves the claim.

By Proposition \ref{clm:blowup_z_fixed}, if $z\in\mathbb{B}$, then there exists $k$ such that $z\in\mathfrak{F}_k(t_*)$, thus 
\[
\mathbb{B}=\underset{k\in\mathbb{N}}{\bigcup}\mathfrak{F}_k(t_*)\,.
\]
Hence, by Baire's Theorem, there exists at least one $k^*$ such that $\mathfrak{F}_{k^*}(t_*)$ has non-empty interior. Thus we can set $\mathfrak{Z}=\mathfrak{F}_{k^*}(t_*)$. If we define $\mathfrak{H}=\{h_0\in V_\beta\,,\enskip h_0\geq \chi_{k^*}\}$, then for all $(h_0,z)\in\mathfrak H\times\mathfrak Z$, there holds $\tau(h_0,z)\leq t_*$. This finishes the proof of Proposition \ref{mainProp}.\hfill\qed

\subsection{End of the proof of Theorem \ref{thm:main} and closing remarks}
\label{subsec:proof_thm}
Given $T_1>0$ and $h_0, h_1\in V_\beta$, observe that there exists a control $z_1\in C([0,T_1];V_{\beta})$ with $z_1(0)=0$, such that:
$h(h_0,z_1,\cdot)$ exists on $[0,T_1]$, and
\begin{equation}\label{controlability}
h(h_0,z_1,T_1)=h_1\,.
\end{equation}

Indeed , similar to \cite{de2002effect}, 
we set for $t\in[0,T_1]$:
$\varphi (t):= (T_1-t)h_0/T_1+th_1/T_1$,
 and define $f(t):=(\varphi(t)- h_0) - \int_0^t\left(A\varphi(s)+b_{\varphi}(s)\right)\d s$.
Taking now
\[
z_1(t)=\int_0^tS(t-s)\frac{\d f}{\d s}\d s\,,\quad t\in[0,T_1]\,,
\]
then the map $v:=\varphi-z_1$ is a solution of the translated equation \eqref{perturbedEq} with $z=z_1$, so that
by the uniqueness part above there holds:
$\varphi = h(h_0,z_1,\cdot )|_{[0,T_1]}$.
Note that $\frac{\d f}{\d t}\in C([0,T_1];V_{\beta-2})$, so that by classical theory of parabolic equations, we have indeed $z_1\in C([0,T_1];V_{\beta})$.

We have now all at hand to prove Theorem \ref{thm:main}.
The proof follows standard arguments, which are detailed for the sake of completeness (the key property being Proposition \ref{mainProp}).
\begin{proof}[Proof of Theorem \ref{thm:main}]
Fix $t_*>0$, $s\in(0,t_*)$
and take $\mathfrak{H}, \mathfrak{Z}$ as in Proposition \ref{mainProp}, with $t_*$ replaced by $t_*-s$. Since the interior of $\mathfrak H$ is nonempty, we can take $h_1\in\interior\mathfrak{H}$.
By the controlability property \eqref{controlability}, there exists $z_1\in C([0,s];V_\beta)$ such that $h(h_0,z_1,\cdot )$ is defined on $[0,s]$ and $h(h_0,z_1,s)=h_1$.
Using in addition Lemma \ref{lem:c_dep_result}, we see that there exists a neighbourhood $\mathcal{V}_1$ of $z_1$ in $C([0,s];V_\beta)$, such that 
\[\forall z\in\mathcal{V}_1\,,\enskip h(h_0,z,s)\in{\mathfrak{H}}\,.\]
Since $\ker\phi ^*=\{0\}$, then $\phi$ has dense range in $V_\beta$ and the process 
$Z(t)=\int_0^tS(t-\sigma )\d w_\phi(\sigma )$, $t\geq 0$,
is non-degenerate.
Therefore,
\begin{align}
\label{positive_1}
&p_0:=\P\circ Z|_{[0,s]}^{-1}(\mathcal{V}_1)>0\,,\\
\intertext{and similarly}
\label{positive_2}
&p_1:=\P\circ Z|_{[0,t_*-s]}^{-1}(\mathfrak{Z})>0\,.
\end{align}
Now, define the extended state space
$\mathfrak X =V_\beta \cup\{\vartriangle\}$ where the terminal state $\vartriangle$ is an isolated point,
and extend the process $X_{t,h_0}(\omega ):=h(h_0,t,Z(\omega ))$ on $\mathfrak X$, by achieving $\vartriangle$ if and only if $t\geq \tau ^\beta (h_0,Z(\omega ))$.
By standard arguments (see e.g.\ \cite{romito2014uniqueness} and references therein),
the family of probability measures $\left(\P_x\equiv\P\circ X_{\cdot ,\,x}^{-1}\right)_{x\in \mathfrak X }$ on $\bar W:=C([0,\infty);\mathfrak X)$, the space of trajectories equipped with the $\sigma $-algebra corresponding to Borelian sets, is Markovian.
Letting $A:=\{w\in\bar W:\tau (w)\geq s\}$, we have
\begin{equation}\label{this}
\P_x\left(A\cap\{w:w(t_*)=\vartriangle\}\right)=\int_{A}\P_{w'(s)}(w:w(t_*-s)=\vartriangle)\P_x(\d w')\,.
\end{equation}
Denote by $P(x,t;\cdot )$ the associated transition probabilities, namely $\P_x(w:w(t)\in\Gamma)$ where $\Gamma\subset \mathfrak X $ is Borelian, and by
$\pi_{s}:\bar W\to \mathfrak X $, $w\mapsto w(s)$. Then
\eqref{this} implies
\[\P_x\left(\tau \leq t_*\right)\geq\int\limits_{\mathfrak H\cap\pi_{s}(A)}P(t_*-s,\xi ;\{\vartriangle\})P\left(s,x;\d \xi \right)\geq p_1\int\limits_{\mathfrak H\cap \pi_s(A)}P\left(h_0,s;\d \xi \right)\,,\]
where we have used \eqref{positive_2} to bound $P(t_*-s,\xi,\{\vartriangle\})$ independently of $\xi \in\mathfrak H$.
Using in addition \eqref{positive_1}, we obtain $\P_x(\tau \leq t_*)>p_1p_0$ which is positive.
Furthermore, using the equivalence of formulations \eqref{eq:weak_u} and \eqref{eq:mild_h} (together with Remark \ref{rem:equiv_A_Delta}), then
Theorem \ref{thm:main} is proved.
\end{proof}

\begin{closing}
Ineluctability of blow-up for 1-corotational solutions of \eqref{SHMFprime} remains an open problem. Let $(\P_x)_{x\in \mathfrak X}$ be the Markovian family on $(\bar W,\mathscr B(\bar W))$ defined in the proof of Theorem \ref{thm:main}.
The following observation is made in \cite[sec.~5]{romito2014uniqueness} (note that we could also let $\mathfrak X$ be any Polish space with additional isolated point $\{\vartriangle\}$):
\begin{adjustwidth}{1em}{}
\hspace{1em} For $w\in\bar W$ denote by $\tau(w) :=\inf\{t\geq 0;w(t)=\vartriangle\}$.
Assume that there exist $T>0$, and an open set $B_0\subset\mathfrak X$, such that:
\begin{hyp}[Uniform lower bound]\label{romito1}
There exists a constant $p_0$, independent of $x\in B_0$ with $\P_x(\tau \leq T)\geq p_0$;
\end{hyp}
\begin{hyp}[Conditional recurrence]\label{romito2}
For all $x\in\mathfrak X$, $\P_x(\sigma=\infty\text{ and }\tau =\infty)=0$, where for $w\in\bar W$, $\sigma(w)$ is defined as $\inf\{t\geq 0,w(t)\in B_0\}.$
\end{hyp}
\noindent Then for each $x\in\mathfrak X$
\[\P_x(\tau <\infty)=1\,.\]
\end{adjustwidth}

Taking $B_0:=\interior{\mathfrak H}$, where $\mathfrak H$ is as in Proposition \ref{mainProp}, then Condition \ref{romito1} has already been checked in \eqref{positive}: it suffices to let $p_0:=\P\circ Z|_{[0,T]}^{-1}(\mathfrak Z)$, where $\mathfrak Z$ is as in Proposition \ref{mainProp}.
However Condition \ref{romito2}, i.e.\ the conditional recurrence for the pre-blow-up set $\interior{\mathfrak H}$, seems difficult to check, because it relates large time behaviour of solutions of \eqref{SHMFprime}.
A natural idea would be to replace first $\mathfrak H$ by some neighbourhood $\mathcal V$ of $0$ in $C^1([0,1])$, say, and then to bound below the probability to reach $\mathfrak H$ from $\mathcal V$.
In the deterministic case, such stability results are for instance obtained in \cite{kung1989heat} or \cite{carbou1997comportement} for the full LLG equation, and rely on the energy estimate $E(t)-E(0)+\iint_{[0,t]\times\D}|u\times\Delta u|^2=0$, which gives uniform bounds in $t>0$.
The main difficulty here is that the counterpart of the above identity writes:
\begin{equation}\label{energy-id}
E(t)-E_0 +\iint_{[0,t]\times\D}|u\times\Delta u|^2\d t= C_\phi t + M_\phi (t)
\end{equation}
$M_\phi $ being a martingale, and $C_\phi $ a positive constant, but note that \eqref{energy-id} is not sufficient to obtain uniform boundedness of $\frac1t\E\int_0^tE(s)\d s$.
\end{closing}
\begin{closing}
As already mentioned in the introduction, it is not expected that the pre-blow-up sets remain open if we release the 1-corotational symmetry assumption. Consider maps with \textit{two degrees of freedom}:
$u_{g,h}(t,x):=(\cos g\sin h,\sin g\sin h,\cos h)$
where $x=(r\cos\theta ,r\sin\theta )$, $g=g(t,r,\theta )$ and $h=h(t,r,\theta )$.
Putting $u_{g,h}$ in \eqref{SHMF},
then we obtain the following parabolic system:
\begin{equation}\label{eq:2_degrees}
\left\{\begin{aligned}
&\d g=\left(\partial _{rr}g+\frac{\partial _rg}{r}+\frac{\partial _{\theta \theta }g}{r^2}+\frac{2}{\tan h}\Big(\partial _rg\partial _rh+\frac{\partial _\theta g\partial _\theta h}{r^2}\Big)\right)\d t+\frac{1}{\sin h}\circ \d w_1\\
&\d h=\left(\partial_{rr} h+\frac{\partial _rh}{r}+\frac{\partial _{\theta \theta }h}{r^2}-\Big(\partial _rg^2+\frac{\partial _\theta g^2}{r^2}\Big)\sin2h/2\right)\d t+\d w_2\,,
\end{aligned}\right.
\end{equation}
where $w_1(t,r,\theta ),w_2(t,r,\theta )$ are independent.

The above conjecture gives some indication that blow-up phenomenon should not happen for \eqref{eq:2_degrees},
even if $u$ is 1-equivariant, that is $g(t,r,\theta )=\theta + \tilde g(t,r) $ and where, in order to preserve this symmetry, we would take $w_j=w_j(t,r)$ for $j=1,2$.
Non-constant $\tilde g(t,r) $ are shown in \cite{merle2011blow} to stabilize the solutions of the Heisenberg equation, which is related to the fact that the gyromagnetic term $u\times\Delta u$ makes the solution turn around the vertical axis $\mathbf k\equiv(0,0,1)$.
This necessary extra degree of freedom also appears when taking the full noise term $\frac{ \Theta_u }{\sin h }\circ\d w_1+\Phi_u\circ \d w_2$ in the equation.
For this reason, we believe that finite-time blow-up for general solutions of \eqref{SHMF} is a zero-probability event.
\end{closing}

%% The Appendices part is started with the command \appendix;
%% appendix sections are then done as normal sections
\appendix
\renewcommand\theequation{A.\arabic{equation}}
\section{Complements in the proof of Theorem \ref{thm:loc_solv}}
\label{appendix}
\subsection{Self-adjointness of $A$}
\label{app1}
Let $(A_1,D(A_1))$ be the unbounded linear operator defined by the operation $\partial _{rr}+(\frac1r \partial _r-\frac{1}{r^2})$ on the domain
\[
D(A_1):=\{f\in C^\infty(\I;\R)\,,\, f(0)=f(1)=0\}\,.
\]
For $f\in D(A_1),$ the term ``$(\frac1r \partial _r-\frac{1}{r^2})f$'' which should be understood in distributional sense over the interval $I\setminus\{0\}$, is well defined by the fact that
for $r$ in $(0,1]:$
\[
\left(\frac{\partial _rf(r)}{r} -\frac{f(r)}{r^2}\right)= \frac{1}{r}f'(0) -\frac1r\left(\lim_{r\to0}\frac{f(r)}{r}\right) + g(r)\equiv g(r)\,,
\]
where from Taylor Formula the remainder $g$ belongs to $H.$

By \cite[Theorem X.39]{reed1975methods}, since $A_1$ is symmetric (this follows by \eqref{ipp}),
and since linear combinations of eigenvalues are dense in $H$ (the Bessel functions are smooth),
it is \emph{essentially self-adjoint}.

Now, let $f_n,n\geq 0,$ $f,g$ in $H$ such that $f_n\to f$ and $Af_n\to g.$
Note that $Af_n$ also converges to $Af$ as a distribution in $\mathscr D'(0,1),$ hence $Af\equiv g\in H.$
Owing to \eqref{Delta_F}-\eqref{plugg_ansatz}, it follows that we have also
\[
\int_\I \left[(\partial _{rr}f)^2+\left(\frac1r\partial _rf-\frac{f}{r^2}\right)^2\right]r\d r<\infty\,,
\]
hence $f\in D(A)$ (continuity follows from Remark \ref{rem:Sobolev_embedding}).
This shows that $(\bar{A_1},D(\bar{A_1}))\subset (A,D(A)).$

Conversely, if $f$ belongs to $D(A),$ consider the sequence $f_n:=\tilde f*\rho _{n}|_\I,$
where $\tilde f$ is as in \eqref{1-fattening}, and $\rho _n\in C^\infty_0(-1,1)$ is an even approximation of the Dirac delta.
From the same computations as in \eqref{id_theta}-\eqref{id_theta2}, we have 
\[
f_n(0)\equiv\int_{[-1,1]}\tilde f(r')\rho _n(-r')r'\d r'=0\,,\quad \text{and similarly}\quad f_n(1)=0\,,
\]
(this holds because $\rho_n $ is even, whereas we extend $f$ in a skew-symmetric way)
hence $f_n\in D(A).$
Moreover $f_n\to f$ in $H,$ and since $Af_n=\tilde {Af}*\rho _n$ it follows that we have also 
\[
Af_n\to Af\quad \text{in}\enskip  H\,.
\]
Hence the opposite inclusion is also true, so that $(A,D(A))\equiv(\bar{A_1},D(\bar {A_1}))$ is self-adjoint.
\hfill\qed

\subsection{Higher regularity}\label{app2}
\paragraph{Local solvability.}
Take $2<\beta< 4, \beta '>\beta -1,$ let $h_0\in V_{\beta}$ and assume that $\phi\in \mathbb{L}_2(H;V_{\beta'} )$. By the same argument as above, we can fix $z\in C([0,T_*];V_{\beta})$ with $z(0)=0$, and argue pathwise.
Denote by $(h,\tau^{\beta-2})$, the maximal solution obtained in Section \ref{sec:proof_main_thm}, which therefore belongs to $C([0,\tau^{\beta-2});V_\beta)$. We aim to find an \textit{a priori} bound on $\|h\|_{T,\beta }$ guaranteeing existence during a positive time.
Write for $0\leq t<\tau^{\beta-2}$:
\begin{equation}\label{eq:Gamma_2}
h(t)=S(t)h_0+\int_0^t(-A)^{\delta } S(t-s)\left[(-A)^{-1-\delta }(-Ab_{h})\right]\d s+z(t)\,.
\end{equation}
where $\delta :=(\beta -2)/2\in(0,1)$,
and using \eqref{P1}, we obtain the bound
$|h(t)|_\beta\leq |S(t)h_0|_\beta+\int_0^t(t-s)^{-\delta }|Ab_{h}|_{H} \d s+|z(t)|_\beta $,
provided all terms are finite.
Therefore, there remains to evaluate the term
$|Ab_{v}|_{H}$.
Direct computations lead to 
\[\begin{aligned}
Ab(r,h)&=\frac{1-\cos2h}{r^2}\partial_{rr}h\\
&\hspace{2em}+ \frac{1-\cos2h}{r^3}\partial_rh-\frac{6h-3\sin2h}{2r^4}\\
&\hspace{4em}+ \frac{2\sin2h}{r^2}(\partial_rh)^2 +\frac{6h-3\sin2h}{r^4} -\frac{4(1-\cos2h)}{r^3}\partial_rh\enskip,
\end{aligned}\]
where, due to compensations, each line of the right hand side must be treated separately.
Using the triangle inequality, we write for $h\in V_\beta$,
$|Ab(r,h)|_{H}\leq I+II+III$, and deal with each term separately.
For the sake of clarity, from now until the end of the proof, we use the notation $T_1(h)\lesssim T_2(h)$ if two terms involving $h\in V_\beta$ are comparable up to a multiplicative constant that does not depend on $h$.

In the sequel, we fix an arbitrary $\epsilon>0$.
Using the bound 
$|G(x)|\leq c|x|^2,\enskip x\in \R$, where $G:x\in\R\mapsto 1-\cos(2x)$,
Remark \ref{rem:equiv_A_Delta}, and Lemma \ref{lem:alpha}--\ref{lem:alpha:item_i} in the case $\nu=1$, $p=\infty$,
the first term satisfies
$I\lesssim|\frac{h}{r}|_{L^{\infty}}^2|\partial_{rr}h|_{H}\lesssim|h|_{{2+\epsilon}}^2|h|_{2}$,
whereas for the second term we have:
\[
\begin{aligned}
II&\lesssim\big|\frac{h^2}{r^2}(\frac{\partial_rh}{r}-\frac{h}{r^2})\big|_{H}+\big|\frac{1-\cos2h-2h^2}{r^3}\partial_rh-\frac32\big(\frac{2h-\sin2h-(4/3)h^3}{r^4}\big)\big|_{H}\\ 
&\enskip \enskip =II_1+II_2\,.
\end{aligned}
\]
Using Lemma \ref{lem:alpha}--\ref{lem:alpha:item_i} with $\nu=1$, $p=\infty$, and Remark \ref{rem:equiv_A_Delta}, there holds
$II_1\lesssim |h/r|_{L^{\infty}}^2|\partial_rh/r-h/r^2|_{H}\lesssim|h|_{{2+\epsilon}}^2|h|_{2}$.
Moreover, by the classical inequalities
$|1-\cos2x -2x^2|\leq cx^4$, $|2x-\sin2x-(4/3)x^3|\leq c|x|^5$ for $x\in \R$,
H\"{o}lder inequality, and Lemma \ref{lem:alpha}--\ref{lem:alpha:item_i} with $(\nu,p)=(3/4,40/3)$, and then \ref{lem:alpha:item_ii} with $p=5$, (resp.\ \ref{lem:alpha:item_i} with $(\nu,p)=(4/5,10)$), the following bound is obtained:
\[ II_2\lesssim\big|\frac{h^4}{r^3}\partial_rh\big|_{H}+\big|\frac{h^5}{r^4}\big|_{H}
\lesssim\big|\frac{h}{r^{3/4}}\big|_{L^{40/3}_{r\d r}}^4\big|\partial_rh\big|_{L^5_{r\d r}}+\big|\frac{h}{r^{4/5}}\big|_{L^{10}_{r\d r}}^5
\lesssim \big|h\big|_{{8/5+\epsilon}}^5\,.
\]

The bound on $III$ is obtained in a similar way.
We write that $III\leq III_1+III_2$, with
\[\begin{aligned}
III_2&=\big|\frac{2\sin2h-4h}{r^2}(\partial_rh)^2 +3\big(\frac{2h-\sin2h-(4/3)h^3}{r^4}\big)-4\big(\frac{1-\cos2h-2h^2}{r^3}\big)\partial_rh\big|_{H}\\
&\lesssim \big|\frac{h}{r^{2/3}}\big|_{L^{30}_{r\d r}}^3\big|\partial_rh\big|_{L^5_{r\d r}}^2+ \big|\frac{h}{r^{4/5}}\big|_{L^{10}_{r\d r}}^5+\big|\frac{h}{r^{3/4}}\big|_{L^{40/3}_{r\d r}}^4\big|\partial_rh\big|_{L^5_{r\d r}}\enskip
\end{aligned}\]
which is bounded by $c|h|_{{8/5+\epsilon}}^5$, by the Sobolev embeddings of Remark \ref{rem:Sobolev_embedding}.
For the first term $III_1=|(h/r)\partial_rh(\partial_rh/r-h/r^2)+h^2/r^2(h/r^2-\partial_rh/r)|_{H},$ we use Remark \ref{rem:equiv_A_Delta} and Lemma \ref{lem:alpha}--\ref{lem:alpha:item_i} with $(\nu,p)=(1,\infty)$. We finally get:
\[III_1\lesssim\big|\frac{h}{r}\big|_{L^{\infty}}\Big|\frac{\partial_rh}{r}-\frac{h}{r^2}\Big|_{H}\left(\big|\partial_rh\big|_{L^{\infty}}+\big|\frac{h}{r}\big|_{L^{\infty}}\right)\lesssim |h|_{2}|h|_{{2+\epsilon}}^2\,.
\]

Going back to \eqref{eq:Gamma_2}, and fixing $\epsilon>0$, we see that for some
constant $c_\epsilon>0$:
\begin{equation}\label{eq:Qh}
|h(t)|_{\beta}\leq c|h_0|_{\beta}+c_\epsilon\int_0^t(t-s)^{-\delta }g(s)\d s +\|z\|_{\tau^{\beta-2},\,\beta}\,,\quad t\in[0,\tau^{\beta-2})\,,
\end{equation}
where we let 
\begin{equation}\label{nota:g}
g(s):=|h(s)|_{{8/5}+\epsilon}^5+|h(s)|_{{2+\epsilon}}^2|h(s)|_{2}\,.
\end{equation}
By a classical generalization of Gronwall Lemma, \eqref{eq:Qh} implies existence in $V_\beta $ for some positive time $0<\tau^\beta \leq\tau^{\beta -2}$. By Remark \ref{rem:equiv_A_Delta}, existence in $\H^\beta $ follows.
\hfill\qed

\paragraph{Propagation of regularity.}
Since the integrand $g(s)$ defined in \eqref{nota:g} does not depend on $|h(s)|_{\beta }$ as soon as $\beta \geq 2+\epsilon ,$ we see that $h$ can always be extended continuously in $V_\beta $ after $t$ provided $|h(t)|_{2+\epsilon }<\infty.$
Therefore:
\begin{equation}\label{eq_tau}
\tau ^{2+\epsilon }=\tau ^{\beta }\enskip \text{for every}\enskip 2+\epsilon \leq \beta<4\, ,
\end{equation}
and every $\epsilon >0$.
Recalling that $|u_h|_{\mathscr H^\beta },|h|_{V_\beta }$ are equivalent quantities (see Remark \ref{rem:equiv_A_Delta}),
this finishes the proof of Theorem \ref{thm:loc_solv}.
\hfill\qed
\renewcommand\theequation{B.\arabic{equation}}
\section{Proof of Lemma \ref{lem:c_dep_result}: continuous dependence of the solution $h(h_0,z)$ with respect to its arguments.}
The following proof is adapted from that of \cite{de2002effect,de2003stochastic}.
In the sequel, we fix $\beta\in(4/3,2]$, $h_0\in V_\beta $ and $z\in C([0,T];V_\beta )$. For $R,T>0$, we denote by $\mathbb{B}_T^R$ (resp.\ $B^R$) the ball of radius $R$, centered at $z$ in $C([0,T];V_{\beta})$ (resp.\ $h_0$ in $V_\beta$).
If $(h_1,\zeta )\in V_\beta\times C([0,T];V_{\beta}),$ we will denote by
$v(h_1,\zeta ,\cdot )$ the corresponding (maximal) mild solution of \eqref{perturbedEq}, obtained by reiteration of the fixed point argument for $\Gamma_{h_1,\zeta ,T}$
(see Section \ref{sec:proof_main_thm}). We also denote by $\tau (h_1,\zeta )$ its existence time.
\begin{proof}[Proof of Lemma \ref{lem:c_dep_result}]
Assume that $\tau (h_0,z)>T$, and let
\begin{equation}\label{def_R}
R:=\|v(h_0,z)\|_{T,\beta }\vee\|z\|_{T,\beta }+1\,,
\end{equation}
define $T_*(R)$ as in \eqref{T_star}, and set $N:=\lfloor T/T_*\rfloor$.
We prove the result by induction.
For each $k\in\{1,\dots,N\}$ denote by $(H_k)$ the sentence:

\begin{Hk}
``There exists $\delta_k>0$, such that
if $(h_0,z)\in B^{\delta_k}\times\mathbb{B}_T^{\delta_k}$, then:
$\tau(h_0,z)>kT_*$, and the map $(h_0,z)\in B^{\delta_k}\times\mathbb{B}_{kT_*}^{\delta_k}\mapsto v(h_0,z,kT_*)$ is continuous.''
\end{Hk}

The case $k=1$ has been proved in Section \ref{sec:proof_main_thm}: 
it suffices to take $\delta _1>0$ depending on $R$ only, so that \eqref{relation_fix} holds for all $(h_1,\zeta )\in B^{R}\times\mathbb B^{R}_{T_*}$.

\item[\indent\textit{Inductive step.}]
Let $k\geq 1$ and assume $(H_\ell )_{0\leq\ell \leq k}$.
In particular $(H_k)$ implies the existence of $\delta>0$, such that
$|v(h_1,\zeta ,kT_*)-v(h_0,z,kT_*)|_{\beta}<\delta _1$ for every $(h_1,\zeta )\in B^{\delta }\times\mathbb{B}^{\delta}_{kT_*}$.
% $|v(h_0,z,kT_*)|_{\beta}< R$ for $(h_0,z)\in B^{\delta _k}\times\mathbb{B}^{\delta_k}$.
For $t\in[0,T_*]$, denoting by $x (t):=z (t+kT_*)-S(t)z (kT_*)$, by $\xi (t):=\zeta (t+kT_*)-S(t)\zeta (kT_*)$, and assuming without loss of generality that $\delta <\delta _1/2$,
we have $\|\xi-x \|_{T_*,\beta }<\delta _1$. 
By $(H_1)$, this implies that $v(h_1,\zeta ,\cdot )$ is at least defined up to $(k+1)T^*$. Moreover, by uniqueness:
\begin{equation}\label{expression}
v\big(h_1,\zeta ,(k+1)T_*\big)=v\big(v(h_1,\zeta ,kT_*),\xi ,T_*\big)\,.
\end{equation}
Still by $(H_1)$, \eqref{expression} defines a continuous map with respect to $(h_1,\zeta )\in B^{\delta_k\wedge \delta }\times\mathbb{B}_{(k+1)T_*}^{\delta_k\wedge \delta }$. This proves $(H_{k+1})$, letting $\delta _{k+1}:=\delta_k\wedge\delta$.

In particular, $(H_N)$ is true, which implies the proposition when $\beta \in (4/3,2]$.
Higher regularity is standard.
\end{proof}
\renewcommand\theequation{C.\arabic{equation}}
\section{Proof of the comparison principle}
\label{app3}
For $\J:=[0,r_1]\subset \I$, we denote the parabolic boundary by $\Sigma_\kappa :=\{0\}\times \J\cup [0,\kappa )\times\partial \J$.
To avoid cumbersome computations, when $f\in H$ we denote by 
$\int_{\J}f:=\int_{\J}f(r)r\d r\,$, and fixing $z$ as in \eqref{ass_z} we write
\[q_f(t,r):=f(t,r)+p\big(z(t,r)+f(t,r)\big)\,,\quad (t,r)\in[0,\kappa )\times \J.\]

Take now $0<T<\kappa\,$, and let $t\mapsto\zeta(t,\cdot)\in C([0,T)\times[0,r_1])$ be a non-negative map such that $\zeta(t,r)$ vanishes for $(t,r)\in\Sigma_T$.
Using \ref{ineq:df_pro:comparison_principle} and \ref{ineq:dg_pro:comparison_principle}, we obtain
\begin{equation}\label{ineq:f_g}
-\int_0^t\int_{\J}(f-g)\partial _t\zeta \leq-\int_0^t\int_{\J}\partial _r(f-g)\partial _r\zeta +\frac{(q_f-q_g)\zeta}{r^2}\,.
\end{equation}

Recall that if $\varphi \in V_2$, and $\psi \in V_1$, there holds the integration by parts formula \eqref{ipp}.
Due to \eqref{ass_z}-\eqref{ass}, and because of $f,g\in C([0,T];V_1)$, then
the right hand side of \eqref{ineq:f_g} is bounded by $c\|f-g\|_{T,1}\|\zeta \|_{T,1}$. By density \eqref{ineq:f_g} can thus be extended to the larger class of test functions 
\[
\mathscr T:=\left\{\zeta:[0,T]\times \J\to\R_+\,,\,\zeta|_{\Sigma_T}=0\text{ and } \|\zeta \|_{T,1}+\|\partial _t\zeta \|_{T,0}<\infty\right\}\,.
\]
Denote by $[x]_+:=\max\{x,0\}$, and define $\zeta(t,r):=[f-g]_+(t,r)$. The fact that $f,g\in C^1([0,T];H)$ implies
\begin{equation}\label{derivee_temps}
\frac{\d}{\d t} \int_{\J}[f-g]_+^2=2\int_{\J}\partial_t(f-g)\zeta \,,
\end{equation}
which is summable on $[0,T]$.
Noticing furthermore that $\zeta \in\mathscr T$ (note that $f\in V_1\Rightarrow [f]_+\in V_1$),
applying \eqref{ineq:f_g} to $\zeta $, \eqref{derivee_temps} and integrating by parts gives:
\begin{equation}\label{positive_term}
\begin{aligned}
\frac12\int_{\J}[f(t)-g(t)]_+^2&\leq-\iint_{[0,t]\times \J}\mathds{1}_{f\geq g}\left(\partial_r(f-g)\right)^2+\frac{[f-g]_+(q_f-q_g)}{r^2}\\
&\leq -\iint_{[0,t]\times \J}\frac{[f-g]_+(q_f-q_g)}{r^2}\enskip,
\end{aligned}
\end{equation}
where we have used the fact that the weak derivative of $x\in\R\mapsto [x]_+,$ is the map
$x\in\R\mapsto\mathds{1}_{\mathbb{R_+}}(x)$.
By \eqref{ass_z}-\eqref{ass}, and since $\beta>1$ (implying the uniform continuity of $z,f,g$ on compacts, see Remark \ref{rem:Sobolev_embedding}),
we can find $\varepsilon(t,r)$ depending on $p''(0),f,g,$ such that $\varepsilon(t,r)\to0$ as $r\to0$, uniformly in $t\in[0,T]$, and such that
$q_f(t,r)-q_g(t,r)=(1+p'(0)+\varepsilon(t,r))(f(t,r)-g(t,r))$.
Since $p'(0)>-1$ and $f|_{\Sigma}\leq g|_{\Sigma}$, this yields the existence of
$\bar r=\bar r(T)$ such that:
\begin{equation}\label{positive}
[f-g]_+(q_f-q_g)\geq0\enskip \text{for a.e.}\enskip(t,r)\in[0,T]\times[0,\bar r]\,.
\end{equation}
Finally we write for all $t\in[0,T]$:
\[\int_{\J}[f(t)-g(t)]_+^2\leq-\iint_{[0,t]\times[0,\bar r]}\frac{[f-g]_+(q_f-q_g)}{r^2}+\frac{1}{2\bar r^2}\iint_{[0,t]\times[\bar r,1]}[f-g]_+|q_f-q_g|\,,\]
which by \eqref{positive} and \eqref{ass}, is bounded by
$\frac{K}{\bar r^2}\iint_{[0,t]\times \J}[f-g]_+^2.$
We finally obtain
$|{[f-g]_+}|_H^2(t)\leq\frac{K}{\bar r^2}\int_0^{t}|{[f-g]_+}|_H^2(s)\d s$ for $t\in[0,T]$, and $[f-g]_+|_{[0,T]\times \J}\equiv0$ follows by Gronwall Lemma.
Reiterating on every subinterval $[0,T]\subset[0,\kappa)$ gives ${f}\leq {g}$ on $[0,\kappa)$.\hfill\qed

\section*{Acknowledgements}
AH wish to acknowledge Anne De Bouard and Fran\c{c}ois Alouges for precious guidance.
Arnaud Debussche and Marco Romito are also warmly thanked for suggesting possible improvements. Partial funding of this research through the the ANR projects Micro-MANIP (ANR-08-BLAN-0199) and STOSYMAP (ANR-2011-BS01-015-03) is gratefully acknowledged.
The author is also grateful to the anonymous referees, especially for having revealed some gaps in the first version of this paper.

\bibliographystyle{elsarticle-num}

\end{document}